\theoremstyle{plain}
\newtheorem{theorem}{Theorem}[section]
\theoremstyle{plain}
\newtheorem*{theorem*}{Theorem}
\theoremstyle{plain}
\newtheorem{corollary}{Corollary}[section]
\theoremstyle{plain}
\newtheorem*{corollary*}{Corollary}
\theoremstyle{plain}
\theoremstyle{plain}
\newtheorem*{lemma*}{Lemma}
\theoremstyle{plain}
\newtheorem{proposition}{Proposition}[section]
\theoremstyle{plain}
\newtheorem*{proposition*}{Proposition}
\theoremstyle{plain}
\newtheorem{conjecture}[theorem]{Conjecture}
\theoremstyle{plain}
\newtheorem*{conjecture*}{Conjecture}
\theoremstyle{remark}
\newtheorem{remark}{Remark}[section]
\theoremstyle{remark}
\newtheorem*{remark*}{Remark}
\theoremstyle{definition}
\newtheorem{definition}{Definition}[section]
\theoremstyle{definition}
\newtheorem*{definition*}{Definition}
\theoremstyle{definition}
\newtheorem{example}{Example}[section]
\theoremstyle{definition}
\newtheorem*{example*}{Example}
\theoremstyle{definition}
\theoremstyle{definition}
\newtheorem*{question*}{Question}
\DeclareMathOperator{\C}{\mathbb{C}}
\DeclareMathOperator{\Hom}{Hom}
\DeclareMathOperator{\sgn}{sgn}
\DeclareMathOperator{\Stab}{Stab}
\DeclareMathOperator{\stab}{\text{Stab}}
\DeclarePairedDelimiterX{\norm}[1]{\lVert}{\rVert}{#1}
\newcommand{\NSf}{\textcolor{red}{/}}
\newcommand{\Df}{\textbf{\textcolor{blue}{\textbackslash}}}
\newcommand{\bT}{\mathsf{T}}
\newcommand{\bA}{\mathsf{A}}
\newcommand{\chamb}{\mathfrak{C}}
\renewcommand{\theconjecture}{%
  \ifnum\value{section}=0
    \thechapter.%
  \else
    \thesection.%
  \fi
  \arabic{theorem}%
}
\title{Integrals of stable envelopes for cotangent bundles to Grassmannians}
\author{Matthew Crawford \and Pavan Kartik \and Reese Lance}
\begin{document}
\maketitle

\abstract{We consider cohomological stable envelopes for a natural torus action $\bT$ on $X_{k,n}=T^*Gr(k,n)$, introduced by Maulik-Okounkov. We define the $\C^*_\hbar$-equivariant integral of the stable envelope using equivariant localization over the subtorus $\C^*_\hbar\subset\bT$, and compute the integral as a non-equivariant limit of the localization over the full torus, $\bT$. The integral of such a class is an integer times a power of $\hbar$, and the main result of this paper is a combinatorial formula for these integers. In 3d mirror symmetry, these non-equivariant limits are expected to reflect some curve counting phenomena on the 3d mirror dual, $X_{k,n}^\vee$. When $k=1$, we obtain the binomial coefficients, and we study some of the combinatorics of the integers for higher $k$, which have not appeared in the literature before. We give some conjectures and interpretations on extending this phenomena to type A quiver and bow varieties. }
\tableofcontents

\section{Overview}

\subsection{Stable Envelopes}

Cohomological stable envelopes were introduced by Maulik-Okounkov \cite{MO} in order to construct geometric actions of the Yangian algebra on the cohomology of Nakajima quiver varieties. These quiver varieties provide a rich class of symplectic resolutions of singularities, and play a key role in the 3D mirror symmetry conjectures. In this paper we restrict our attention to the most fundamental Nakajima quiver variety, the cotangent bundle to the Grassmannian. 

If a torus $\bT$ acts on a smooth variety, $X$, cohomological stable envelopes are maps of $H_\bT^\bullet(\text{pt})$-modules 

$$
\stab_{\chamb}: H_{\bT}^\bullet(X^\bT)\to H_{\bT}^\bullet(X)
$$
depending on a choice of chamber, $\chamb$, which is a connected component of the complement of all root hyperplanes in an associated root system. The stable envelope of a torus fixed point, $p\in X^\bT$, is the image of its equivariant cohomology class under this map, $\stab_{\chamb}(p)$. According to the theory of equivariant localization via \cite{AB}, each $\stab_\chamb(p)$ class is determined by its restrictions to fixed points. Thus we may think of the map $\stab_{\chamb}$ as being encoded in the matrix of restrictions 

$$
(\stab_\chamb)_{ij}=\stab_{\chamb}(p_i)\big|_{p_j}.
$$
In the enumerative formulation of 3D mirror symmetry, the elliptic stable envelopes conjecturally relate certain enumerative counts on $X$ to enumerative counts on its mirror dual, $X^!$, see \cite{AO}. When setup appropriately this ``relation" is via matrix multiplication and simple algebraic changes of variables. Mathematically these counts are given by vertex functions, equivariant curve counts, which are also of physical interest for the corresponding quiver gauge theories.

\subsection{Integration and Localization}

For the rest of the paper, we set $X_{k,n}=T^*Gr(k,n)$ and $\bT=(\C^*)^n\times\C^*_\hbar$. We define the $\mathbb{C}^{*}_{\hbar}$-equivariant integral, denoted
$$
\int_{X_{k,n}}\Stab(p)
$$
by viewing $\Stab(p)$ as a class in $H_{\C^*_\hbar}^*(X_{k,n})$, and performing equivariant localizing with respect to the torus $\C^*_\hbar$.

In this case, the integral is expressed as a sum of rational functions in the parameter $\hbar$ representing the coordinate on $\C^*_{\hbar}$. We prove in Proposition (\ref{int-is-integer}) that the integral\footnote{The scaling factor of $\hbar$ is just a convenience for formulas.}
$$
\int_{X_{k,n}}\hbar^{k(n-k)}\cdot \stab(p_I)
$$
is actually an integer.\\
We compute this integral in 2 steps: We use $\bT-$equivariant localization, i.e we compute the sum 
\begin{equation}\label{stabsum}
    \sum\limits_{p_{J}\in X_{k,n}^{\bT}}\frac{\stab(p)|_{p_{J}}}{e(T_{p_{J}}X_{k,n})}\end{equation}
whose terms are rational functions in $\mathbf{a}:=(a_{1},\dots,a_{n})$ (the coordinates on $(\mathbb{C}^{*})^{n}\subset\bT$) and $\hbar$. We then compute the limit of the quantity in (\ref{stabsum}) as $\textbf{a}\to 0$, and we prove in Proposition (\ref{comm-diagram}) that this limit equals $\int_{X_{k,n}}\stab(p)$. 

Our study is motivated by the role of stable envelopes in 3d mirror symmetry, and its interaction with vertex functions. Originally, the integrals of stable envelopes as we defined appear as coefficients of expansion of the integral representation of vertex functions for $T^{*}Gr(k,n)$, see \cite{SV2} and \cite{SV}. 

In \cite{DSm}, they compute non-equivariant limits of vertex functions on Nakajima quiver varieties, and conjecture that the result is related to representation-theoretic information on the corresponding 3d mirror dual. In \cite{DSm2}, they compute Euler characteristics of stable envelopes for the cotangent bundle to the full flag variety, and relate it to an enumerative object on the 3d mirror dual variety called the index vertex. 

\subsection{Nonequivariant limits}

The sums appearing in (\ref{stabsum}) are, for example,

\begin{example}
Consider the case $X_{1,4}=T^*\mathbb P^3$, and the corresponding torus, $\bT=(\C^*)^4$. There are 4 fixed points, $p_1,\dots,p_4$. Combining terms in the sum, we have

\begin{align*}
\sum\limits_{p_{j}\in X_{1,4}^{\bT}}\frac{\stab(p_{1})|_{p_{j}}}{e(T_{p_{j}}X_{1,4})}
&=-\frac{1}{(-\hbar+a_{1}-a_{2})(-\hbar+a_{1}-a_{3})(-\hbar+a_{1}-a_{4})}\\
\\
\sum\limits_{p_{j}\in X_{1,4}^{\bT}}\frac{\stab(p_{2})|_{p_{j}}}{e(T_{p_{j}}X_{1,4})}
&=\frac{3\hbar^{2}-3\hbar a_{1}-a_{1}^{2}-\hbar a_{2}+2\hbar a_{3}-a_{1}a_{3}+2\hbar a_{4}-a_{1}a_{4}+a_{3}a_{4}}{(\hbar+a_{2}-a_{1})(\hbar+a_{3}-a_{1})(\hbar+a_{3}-a_{2})(\hbar+a_{4}-a_{3})(\hbar+a_{4}-a_{2})}\\
\\
\\
\sum\limits_{p_{j}\in X_{1,4}^{\bT}}\frac{\stab(p_{3})|_{p_{j}}}{e(T_{p_{j}}X_{1,4})}
&=\frac{3\hbar^{2}-2\hbar a_{1}-2\hbar a_{2}+a_{1}a_{2}+\hbar a_{3}+3\hbar a_{4}-a_{1}a_{4}-a_{2}a_{4}+a_{4}^2}{(\hbar+a_{3}-a_{1})(\hbar+a_{3}-a_{2})(\hbar+a_{4}-a_{1})(\hbar+a_{4}-a_{2})(\hbar+a_{4}-a_{3})}\\
\\
\\
\sum\limits_{p_{j}\in X_{1,4}^{\bT}}\frac{\stab(p_{4})|_{p_{j}}}{e(T_{p_{j}}X_{1,4})}
&=\frac{1}{(\hbar-a_{1}+a_{4})(\hbar+a_{4}-a_{2})(\hbar+a_{4}-a_{3})}.
\end{align*}

\end{example}
The non-equivariant limits, which are the highest power of $\hbar$ in the numerator, are exactly the binomial coefficients $(1,3,3,1)$ (times a power of $\hbar)$. Therefore we are led to guess that the ${n\choose 1}=n$ integers corresponding to each $(\C^*)^{n-1}$-fixed point of $T^*\mathbb{P}^{n-1}$ are the binomial coefficients
\[
\int_{X_{1,n}}\stab(p_i)=\binom{n}{i-1}\cdot \hbar^{n-1},
\]
and to also wonder what the generalization to $X_{k,n}$ may be. The answer must be some system of integers which generalize the binomial coefficients in some way, which is interesting independent of the geometric setting. The main content of this paper is to present a proof of a closed form for these integrals for the case of $X_{k,n}$. 

\begin{theorem*}[Theorem \ref{lim-calc}]

We give an explicit, combinatorial formula $F(p_I)=\int_{X_{k,n}}\Stab(p_I)$, in terms of only $I,n,k$. 
\end{theorem*}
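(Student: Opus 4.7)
The plan is to execute the two-step procedure outlined in the paper: first perform $\bT$-equivariant localization to write out equation~(\ref{stabsum}) as a rational function in $\hbar, a_1, \dots, a_n$, and then extract the non-equivariant limit $\Ba \to 0$, which by Proposition~(\ref{comm-diagram}) computes the $\C^*_\hbar$-equivariant integral. Proposition~(\ref{int-is-integer}) guarantees this limit is an integer multiple of $\hbar^{-k(n-k)}$, so the task reduces to identifying this integer combinatorially.

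For the first step, I would substitute the explicit formulas for the restrictions $\stab(p_I)|_{p_J}$ at the $\bT$-fixed points of $T^*Gr(k,n)$ (available, for instance, from the abelianization procedure of Maulik--Okounkov, or equivalently via the weight functions of Rim\'anyi--Tarasov--Varchenko). Each restriction is a polynomial in $\hbar$ and $a_1, \dots, a_n$ of degree $k(n-k)$, vanishing unless $J$ dominates $I$ in the chamber-dependent partial order, while the Euler classes $e(T_{p_J}X)$ are products of weights $a_i - a_j$ and $\hbar \pm (a_i - a_j)$. Clearing denominators in (\ref{stabsum}) produces a single rational function whose $\Ba \to 0$ limit is finite by Proposition~(\ref{int-is-integer}).

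For the second step, I would extract the limit by expanding each factor $(\hbar \pm (a_i - a_j))^{\pm 1}$ as a geometric series in $\Ba/\hbar$ and collecting the constant term in $\Ba$. Systematically, this limit is best realized as an iterated residue at $a_1 = \cdots = a_n = 0$; by standard contour deformations it reorganizes into a sum indexed by combinatorial data --- for instance tuples of nonnegative integers of prescribed total, or certain fillings of the $k \times (n-k)$ rectangle associated with the partition encoding $p_I$. Matching this sum to the proposed closed form $F(p_I)$ constitutes the heart of the theorem.

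The main obstacle will be organizing the large cancellations in the residue sum so that the combinatorial identification with $F(p_I)$ emerges cleanly. If the direct match proves intractable, two fallback strategies are available: (i) induction, showing both sides satisfy the same recursion under the natural embedding $T^*Gr(k,n-1) \hookrightarrow T^*Gr(k,n)$ (using the compatibility of stable envelopes with restriction) or under the Grassmann duality $T^*Gr(k,n) \cong T^*Gr(n-k,n)$; and (ii) a generating-function argument, assembling $\sum_I F(p_I)\, x^I$ into a closed form that generalizes the $k=1$ expansion $(1+x)^n$ from Example~1.1, and comparing it to the analogous generating function for the localization sum. A consistency check in every approach is that the formula must reduce to $\binom{n}{i-1}$ when $k=1$.
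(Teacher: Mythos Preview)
Your setup through the first step is essentially what the paper does: replace $\Stab(p_I)|_{p_J}$ by the restricted weight function $W(p_I)|_{p_J}$ and write out the localization sum explicitly. The divergence is at the second step, and there the proposal is missing the key idea.

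Your plan to expand each $(\hbar\pm(a_i-a_j))^{\pm 1}$ as a geometric series in $\Ba/\hbar$ overlooks the factors $(a_i-a_j)$ \emph{without} $\hbar$ that sit in both $e(T_{p_J}X)$ and the denominator of the weight function. These produce genuine poles at $\Ba=0$ in each individual summand; the limit exists only after nontrivial cancellation across the whole sum over $J$ (the paper relegates a direct proof of this cancellation to an appendix). A term-by-term series expansion therefore does not make sense, and recasting the limit as an ``iterated residue at $a_1=\cdots=a_n=0$'' is not right either: the quantity sought is the honest value at $\Ba=0$ of a function that is regular there only globally, not a residue.

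The paper's actual device is to collapse the multivariable limit to a single variable by the specialization $a_s\mapsto s\,z\hbar$. Every $(a_i-a_j)$ becomes $(i-j)z\hbar$, so the individual poles become explicit integer powers of $z$; after bookkeeping, the sum over $J\geq I$ is rewritten in terms of products of ratios $\Gamma(u+j_r-i_r)/\Gamma(u+j_r)$ with $u=1/z$. The limit $\Ba\to 0$ is then the $u\to\infty$ asymptotics of these ratios, and the explicit formula $F(p_I)$ emerges from the standard asymptotic expansion
\[
\frac{\Gamma(u+a)}{\Gamma(u+b)}\sim u^{a-b}\sum_{k\geq 0}\frac{G_k(a,b)}{u^k},
\]
with $G_k(a,b)$ given by generalized Bernoulli polynomials. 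None of your proposed routes---geometric series, iterated residues, induction along $n$, or a generating-function match---contain this maneuver, and without it there is no visible path from the localization sum to the specific closed form the theorem asserts.
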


The formula $F$ is a rational function, and it is not clear that, upon substituting in specific values of $I,k,n$, it becomes an integer. Nevertheless this must be true, again because it is equal to $\int_{X_{k,n}}\Stab(p_I)$.

When setting $k=1$, $F$ recovers the binomial coefficients. For higher $k$ we find sequences of integers with interesting combinatorial relations. Many statements we will make about the combinatorics of these integers remain as conjectures, and we pose further questions for which we cannot even make conjectures. 

\subsection{Combinatorics of integrals}

The $n$ integers $\int_{X_{1,n}}\Stab(p)$ (ignoring the $\hbar$ factor) obtained from $X_{1,n}=T^*\mathbb{P}^{n-1}$ are the binomial coefficients, and thus can be arranged into Pascal's triangle, with its recurrence relation given by Pascal's rule. This is an infinite triangle (2-simplex) where each number in a given layer is the sum of the two numbers in the layer directly above it. We conjecture that a similar phenomenon holds for the $\binom{n}{k}$ integers arising from integrals over $X_{k,n}$ (after the addition of some correction terms) arranged in a $(k+1)-$ simplex, where the integers in one layer are determined by the $2^{k}$ ``nearby" integers in the layer above (we will refer to this phenomenon as ``$2^{k}$-neighbor addition"). We prove this statement for the case $k=2$. We demonstrate another method to modify the combinatorial problem so that 4-neighbor addition holds without the addition of the ``correction" terms. These combinatorial structures may be shadows of nearby geometric problems. 

These results we conjecture for higher $k$ (and prove for $k=2$) demonstrate the complexity of this family of integers. This is especially interesting when they are viewed as a generalization of the binomial coefficients. 

We also provide an outline of a new conjectural method to find the $\bT$-equivariant integrals of stable envelopes due to A. Varchenko. We introduce the quantities $\mathbb{V}({\lambda})$, for each partition $\lambda$ in an $(n-k)\times k$ rectangle, which are some sums of weighted paths to the Young diagram $Y_\lambda$. The weights are in formal variables $\textbf{a},\hbar$, and we conjecture that $\mathbb{V}(\lambda)$ is exactly the $\bT$-equivariant integral of the corresponding fixed point. 

\subsection{Future Directions}
Cohomological stable envelopes may be defined naturally for all type A quiver varieties. Though our definition via $\C^*_\hbar$-equivariant localization no longer makes sense, their integrals can nevertheless be computed by $\bT$-equivariant localization. We conjecture their nonequivariant limits also exist and are rational numbers times some power of $\hbar$, supported by computer experiments.

For bow varieties, we find that the non-equivariant limit mentioned above does not necessarily exist, though we do not have a conceptual reason for why that is the case. Such an explanation would be interesting to learn. 

One interesting property we observe is the following, (see section \ref{sec5} for precise statement):

\begin{conjecture*}[\ref{bow}]
A bow variety $\mathcal{C}$ is Hanany-Witten equivalent to a quiver variety
if and only if the non-equivariant limit of the integral of the stable envelope $\displaystyle \lim_{\textbf{a}\to 0}\int_{\mathcal{C}}\Stab(p)$ exists for all torus fixed points $p$. 
\end{conjecture*}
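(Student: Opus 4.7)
The plan is to prove the two directions separately, using Hanany-Witten transitions as the organizing tool and building on the results of Botta--Rimanyi that track stable envelopes across HW moves. For the implication that HW-equivalence to a quiver variety implies existence of the limit, I would take as input the (still conjectural but computer-supported) statement recorded earlier in this section that $\lim_{\mathbf{a}\to 0}\int_X\Stab(p)$ exists on every type A Nakajima quiver variety. The bridge to the bow setting is that HW-related bow varieties have their stable envelopes identified up to an explicit normalization factor. Applied term by term in the $\bT$-equivariant localization sum, the task is to verify that this factor, together with the corresponding change in Euler class denominators, introduces no new singularities at $\mathbf{a}=0$: fixed loci correspond bijectively under HW, tangent weights transform predictably, and the normalization contributes only monomial factors in $\hbar$ and the K\"ahler parameters.

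For the reverse direction the plan is to locate, at some torus fixed point of any bow variety not HW-equivalent to a quiver variety, a term in the localization sum whose Laurent expansion in $\mathbf{a}$ begins with a genuinely negative power that cannot be cancelled by other terms. A bow variety is HW-equivalent to a quiver variety precisely when its brane diagram can be reduced to a ``separated'' form in which all the NS5 branes are sorted to one side of all the D5 branes; a minimal obstruction is then a linked configuration that HW moves cannot eliminate. I would write down the equivariant weights at a fixed point supported near such an obstruction and exhibit a single Euler class factor whose $\mathbf{a}$-specialization vanishes strictly faster than the corresponding contribution in the numerator, producing an uncancellable pole. An induction on the number of irreducible obstructing crossings in the brane diagram would then propagate the divergence from this minimal example to every non-separable bow variety.

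The main obstacle, and where the real work lies, is this reverse direction: producing one divergent example is easy, but ruling out conspiratorial cancellations across all fixed points of an arbitrary non-separable bow variety requires a non-cancellation theorem for the full localization sum. This will likely demand a careful analysis of the tie combinatorics and weight structure for Cherkis bow varieties, perhaps via a partial ordering on fixed points that isolates the singular contribution. A secondary difficulty is that the forward direction relies on the still-open conjecture that the nonequivariant limit exists for all type A Nakajima quiver varieties; to upgrade the present biconditional from a conjecture to a theorem one would first need to establish that existence statement in general, which is currently proved in this paper only in the Grassmannian case.
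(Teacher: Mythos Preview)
The paper does not prove this statement: it is explicitly a \emph{conjecture}, and Section~\ref{sec5} opens with the disclaimer that it ``will contain no proofs.'' The only evidence offered is computational---one worked example of a non-quiver bow variety where a limit diverges, and computer experiments supporting the quiver-variety side. So there is no proof in the paper for your proposal to be compared against.

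That said, a few comments on your sketch. Your forward direction is honest about its dependence on the prior conjecture (existence of limits for all type~A quiver varieties), and the idea of transporting the localization sum across HW moves via the Botta--Rim\'anyi normalization is the natural approach; you would still need to check carefully that the HW normalization factors and the induced bijection on fixed points do not introduce poles in the $\mathbf{a}$-variables, which is plausible but not automatic. Your reverse direction contains a mischaracterization: it is not true that a bow variety is HW-equivalent to a quiver variety exactly when its brane diagram can be separated---\emph{every} brane diagram can be HW-moved to separated form. The correct criterion (used in the paper's example) is that the separated form be \emph{co-balanced}, equivalently that the NS5 charge vector can be made weakly decreasing. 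Your proposed obstruction (``a linked configuration that HW moves cannot eliminate'') therefore needs to be replaced by the actual combinatorial obstruction to co-balancedness. Beyond that, you correctly identify the hard part: ruling out cancellation across the full localization sum. The paper offers no mechanism for this, and neither does your outline beyond hoping a partial order isolates the pole; this is where a genuine new idea would be needed.
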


In this paper, we study cohomological stable envelopes. In the same settings there are also K-theoretic and elliptic stable envelopes and weight functions and vertex functions, and we plan to extend these results to those cases.

\subsection{Acknowledgements}
We thank Andrey Smirnov and Alexander Varchenko for introducing this problem to us. We further thank Corbin Balitactac, Tommaso Botta, Hunter Dinkins, Jacob Folks, and Rich\'ard Rim\'anyi for helpful discussions and suggestions. In particular, some of the conjectures we make here are supported by computer experiments performed with Maple code written by Rich\'ard Rim\'anyi and Andrey Smirnov. M. Crawford was partially supported by the NSF Grant DMS-1901796. P. Kartik was partially supported by the NSF grant DMS-2401380.

\section{Setup}
\subsection{Stable Envelopes}\label{stabsection}
We summarize the axiomatic treatment of stable envelopes as in \cite{MO}, in our restricted setting. For a more general and detailed discussion, see the cited source. \\\\
Let $X_{k,n}:=T^*Gr(k,\C^n)$ be the cotangent bundle to the Grassmannian parameterizing $k$-dimensional linear subspaces of $\C^n$. Let $\bA:=(\C^*)^n$, which has an induced action on $X_{k,n}$ by the action on the underlying $\C^n$ that preserves the symplectic form. The larger torus $\bT:=\bA\times \C^*_\hbar$ acts on $X_{k,n}$ by letting $\hbar$ scale the cotangent fibers. The fixed points of the $\bT$ action all lie in the 0-section of $X_{k,n}$ and are labeled by increasing subsets $I\subset [n]$ of size $k$. The fixed point corresponding to the subset $I$ is denoted $p_I$. We will not distinguish between $p_I$ as an element of $T^*Gr(k,n)$ versus $Gr(k,n)$.

The inclusion $i:X_{k,n}^\bT\to X_{k,n}$ induces a restriction on $\bT$-equivariant cohomology
$$
i^*: H_\bT^\bullet(X_{k,n})\to H_\bT^\bullet(X_{k,n}^\bT).
$$
Stable envelopes are  maps in the other direction
$$
\stab_{\mathfrak C}: H^\bullet_\bT(X_{k,n}^\bT)\to H^\bullet_\bT(X_{k,n})
$$
satisfying a collection of axioms, which we must fix some notation to state: We fix the cocharacter $\sigma: \C^*\to\bA$
$$
z\mapsto (z,z^2,\dots,z^n)
$$
and the corresponding ``standard" chamber as $\mathfrak C=\{1,2,\dots,n\}$, meaning $a_1\leq a_2\leq \dots \leq a_n$. These choices are constant throughout, so we omit the usual subscripts which indicate a dependence on the chamber. We only consider the standard chamber because any other choice of chamber will simply permute which integral is associated to which fixed point, along with appropriate substitutions in the $a$-variables: We state this proposition here, which is not from the Maulik-Okounkov source, but reserve the proof for Section \S\ref{weight-fcn-section}. Any other chamber, $\chamb^\tau$, is related to $\chamb$ by a permutation $\tau\in S_n$. A choice of chamber is denoted by a permutation $[\nu_{1},\dots,\nu_{n}]$ of $1,\ 2,\ \dots,n$, and $[\nu_{1},\dots,\nu_{n}]$ denotes the chamber in which $a_{\nu_{1}}\leq a_{\nu_{2}}\leq\dots a_{\nu_{n}}$. In this case $[\nu_{1},\dots,\nu_{n}]^{\tau}$ denotes the chamber $[\tau(\nu_{1}),\dots,\tau(\nu_{n})]$.

\begin{proposition*}[Proposition (\ref{chamber-dependence})]
Let $\tau \in S_n$. Then 
\[
\int_{X_{k,n}}\Stab_{\chamb^{\tau}}(p) = \tau\left( \int_{X_{k,n}} \Stab_{\chamb}(\tau^{-1}(p)) \right)
\]
where $\tau(p) = \{\tau(i)\ |\ i \in p\}$, and the action of $\tau$ on the left hand side sends $a_i \mapsto a_{\tau(i)}$.
\end{proposition*}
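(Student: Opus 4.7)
The plan is to exploit the natural $S_n$-action on $X = T^*Gr(k,n)$ coming from permutation of basis vectors of $\C^n$. For $\tau \in S_n$, this gives an automorphism $\tau: X \to X$ which is equivariant with respect to the automorphism of $\bT$ permuting the $(\C^*)^n$ factors (and fixing $\C^*_\hbar$). Consequently, $\tau$ induces an isomorphism $\tau^*: H^*_\bT(X) \to H^*_\bT(X)$ which intertwines with the ring automorphism $a_i \mapsto a_{\tau(i)}$ of $H^*_\bT(\mathrm{pt}) = \C[a_1,\ldots,a_n,\hbar]$. At the level of fixed points one has $\tau \cdot p_I = p_{\tau(I)}$, and $\tau$ carries the cocharacter defining $\chamb$ to the one defining $\chamb^\tau$.

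Next, I would verify that the stable envelope construction is covariant under this $S_n$-action. The three Maulik--Okounkov axioms defining $\Stab_\chamb(p)$ — support on $\mathrm{Attr}_\chamb(p)$, the normalization by the Euler class of the normal bundle at $p$, and the degree bound on off-diagonal restrictions — depend on the chamber only through the attracting flow. Since $\tau$ sends attracting sets, normal bundle weights, and degree conditions for $(\chamb, p)$ bijectively onto those for $(\chamb^\tau, \tau(p))$, the uniqueness clause of the axioms forces
\[
\Stab_{\chamb^\tau}\bigl(\tau(p)\bigr) = \tau^*\bigl(\Stab_\chamb(p)\bigr).
\]
Replacing $p$ by $\tau^{-1}(p)$ gives $\Stab_{\chamb^\tau}(p) = \tau^*\bigl(\Stab_\chamb(\tau^{-1}(p))\bigr)$.

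Finally I would apply $\int_X$ to both sides. Because this integral is computed by $\bT$-equivariant Atiyah--Bott localization, which is manifestly compatible with the $S_n$-action (it simply permutes fixed-point terms and transports the associated tangent weights under the $a$-variable substitution), the pushforward commutes with $\tau^*$. Therefore
\[
\int_X \Stab_{\chamb^\tau}(p) \;=\; \int_X \tau^*\!\bigl(\Stab_\chamb(\tau^{-1}(p))\bigr) \;=\; \tau\!\left(\int_X \Stab_\chamb(\tau^{-1}(p))\right),
\]
where the $\tau$ on the right denotes the substitution $a_i \mapsto a_{\tau(i)}$ on $H^*_\bT(\mathrm{pt})$-coefficients, establishing the claim.

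The primary obstacle I anticipate is bookkeeping: keeping the directions of $\tau$ versus $\tau^{-1}$ consistent on fixed points, cocharacters, and $a$-variables, since the degree axiom compares equivariant weights against a cocharacter and that cocharacter is itself being acted upon. Once these conventions are fixed coherently, the argument reduces to unwinding definitions, with the uniqueness theorem for stable envelopes from \cite{MO} doing the real work.
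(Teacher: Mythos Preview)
Your argument is correct and takes a genuinely different route from the paper. The paper's proof relies on the weight-function representation of stable envelopes (Theorem~\ref{weight-fcns-stab}): by definition $W_{\chamb^\tau}(p_I):=\tau\bigl(W(p_{\tau^{-1}(I)})\bigr)$, so the key identity is essentially built in, and the proof reduces to writing both sides via $\bT$-equivariant localization, reindexing the sum over fixed points by $p_J\mapsto\tau(p_J)$, and checking directly that $\tau\bigl(e(T_{p_J}X)\bigr)=e(T_{\tau(p_J)}X)$. You instead use the geometric $S_n$-action on $X$ and transport the Maulik--Okounkov axioms, letting uniqueness of stable envelopes do the work; this is more conceptual and does not depend on the explicit weight-function formulas, so it would apply verbatim in any situation with stable envelopes and a compatible finite symmetry. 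The paper's approach is shorter given that the weight-function machinery is already in place and the chamber-dependence is hard-coded into the definition of $W_{\chamb^\tau}$. Your caveat about $\tau$ versus $\tau^{-1}$ bookkeeping is well placed: the only delicate point is making the conventions for pullback on cohomology, the action on fixed points, and the substitution on $a$-variables line up, but once fixed the argument is routine.
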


Due to this proposition, we assume the standard chamber for the rest of our calculations. The choice of chamber determines a $\bT$-invariant decomposition of the tangent space of any fixed point into attracting and repelling directions
$$
T_I{X_{k,n}} \cong N_I^+ \oplus N_I^-.
$$
Denote the attracting set of a fixed point $p_I\in X_{k,n}^\bT$
$$
\text{Attr}(p_I)=\big\{x\ \big|\ \lim_{z\to 0}(z,z^2,\dots,z^n)\cdot x=p_I\big\},
$$
and the full attracting set
$$
\text{Attr}^f(p_I) = \bigsqcup_{p_J\prec p_I} \text{Attr}(p_J),
$$
and say that $p_J\prec p_I$ if $p_J \in \text{Attr}_{\chamb}^f(p_I)$. The transitive closure of this relation defines a partial order on the set of torus fixed points.

\begin{theorem}[\cite{MO}]\label{stab-defn}
There exists a unique map of $H_\bT^*(\text{pt})$-modules
$$
\Stab: H_\bT^*(X_{k,n}^\bT)\to H_\bT^*(X_{k,n})
$$
such that for any $p_I \in X_{k,n}^\bT$ the stable envelope $\Gamma=\Stab([p_I])$ satisfies: \\\\
i) $\mathrm{supp}\ \Gamma\subset \mathrm{Attr}^f(p_I)$,\\
ii) $\Gamma|_{p_I}=e(N_-)$,\\
iii) $\mathrm{deg}_\bA\Gamma\big|_{p_J}<\frac12\mathrm{dim}(X_{k,n})$, \quad for any $p_J\prec p_I$, $p_J\ne p_I$. 
\end{theorem}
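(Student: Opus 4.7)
The proof splits into uniqueness and existence. For uniqueness, suppose $\Gamma_1$ and $\Gamma_2$ both satisfy axioms (i)--(iii) for a given fixed point $p_I$, and set $\Delta := \Gamma_1 - \Gamma_2$. By equivariant localization the restriction map $H_\bT^*(X) \to \bigoplus_J H_\bT^*(p_J)$ is injective after inverting a finite set of weights, so it suffices to prove $\Delta|_{p_J} = 0$ for every fixed point $p_J$. Axiom (i) gives this immediately when $p_J \notin \text{Attr}^f(p_I)$, and axiom (ii) gives it at $p_J = p_I$. For the remaining $p_J \prec p_I$ with $p_J \ne p_I$, I would induct down the partial order: the support condition forces $\Delta|_{p_J}$ to be divisible by $e(N_J^-)$, which has $\bA$-degree equal to $\frac{1}{2}\dim X$, whereas axiom (iii) forces $\Delta|_{p_J}$ to have strictly smaller $\bA$-degree, so $\Delta|_{p_J}$ must vanish.

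For existence, I would construct $\stab([p_I])$ explicitly and check the three axioms. A natural starting candidate is the fundamental class $[\overline{\text{Attr}(p_I)}]$, which satisfies (i) and restricts to $p_I$ as a multiple of $e(N_I^-)$. However this class typically violates the degree axiom (iii) at fixed points deeper in the partial order, so it must be corrected by iteratively subtracting multiples of already-constructed stable envelopes $\stab([p_J])$ for $p_J \prec p_I$, whose supports lie in $\overline{\text{Attr}(p_I)}$. The systematic form of this correction is the Lagrangian-correspondence construction of Maulik--Okounkov, which produces the class as a cycle in the conormal variety of the attracting set via a choice of polarization; for the special case $X = T^*Gr(k,n)$ one can instead appeal to the explicit weight-function formulas of Rim\'anyi--Tarasov--Varchenko and verify the axioms directly from the combinatorics of the symmetrizing operators.

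The main obstacle is axiom (iii). Axioms (i) and (ii) are both elementary to arrange, but the strict $\bA$-degree bound at off-diagonal fixed points is the delicate one: the fundamental class of the closure of the attracting set does not satisfy it in general, and matching the correct degree requires a careful choice of polarization, which is precisely what pins down the signs and the $\hbar$-powers in the final formula. Once (iii) is established for any candidate satisfying (i) and (ii), the uniqueness argument of the first paragraph guarantees no further correction is needed, and the map $\stab$ is then determined on all of $H_\bT^*(X^\bT)$ by $H_\bT^*(\mathrm{pt})$-linearity from its values on the point classes $[p_I]$.
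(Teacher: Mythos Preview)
The paper does not supply its own proof of this theorem; it is quoted as background from Maulik--Okounkov \cite{MO}, with existence in the concrete case $X=T^*Gr(k,n)$ later obtained by citing the weight-function representatives of \cite{RTV} (Theorem~\ref{weight-fcns-stab}). Your sketch is precisely the standard Maulik--Okounkov argument---uniqueness via the contradiction between $e(N_J^-)$-divisibility (forced by the support condition under downward induction on the partial order) and the strict $\bA$-degree bound of axiom~(iii), and existence via the Lagrangian-correspondence construction or, in this case, the explicit weight functions---and is correct.
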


\begin{example}
When $k=1,\ n=3$ i.e $X_{1,4}=T^*\mathbb{P}^3$, the $4\times 4$ matrix $(\stab_\chamb)_{ij}$ is
\[\begin{pmatrix}
   (12)_\hbar(13)_\hbar(14)_\hbar & \hbar(23)_\hbar(24)_\hbar & \hbar(32)_\hbar(34)_\hbar& \hbar(42)_\hbar(43)_\hbar\\
   
   0 & (12)(23)_\hbar(24)_\hbar & \hbar(13)(34)_\hbar & \hbar(14)(43)_\hbar\\
   
   0 & 0 & (13)(23)(34)_\hbar & \hbar(14)(24)\\
   
   0 & 0 & 0 & (14)(24)(34)
\end{pmatrix}\]
where for readability we abbreviate $(ij)\equiv (a_i-a_j)$ and $(ij)_\hbar\equiv (a_i-a_j+\hbar)$. Here $a_i$ are the coordinates of the torus $\bA=(\C^*)^4$ and $\hbar$ is the coordinate of an additional 1-dimensional torus, $\C^{*}_{\hbar}$ scaling the cotangent fibers. 

This matrix can be computed using the defining axioms of stable envelopes as in Theorem (\ref{stab-defn}), and it also follows immediately after accepting Theorem (\ref{weight-fcns-stab}). 
\end{example}

\subsection{Equivariant Localization and Integration}\label{eq-loc}

Over $X_{k,n}$ there is an exact sequence of complex vector bundles
$$
0\to \mathcal S \to \C^n\to \mathcal Q \to 0.
$$
$\mathcal S$ and $\mathcal Q$ are the tautological sub and quotient bundles of rank $k$ and $n-k$, respectively, while $\C^n$ is the trivial bundle. Denote the Chern roots of $\mathcal S, \mathcal Q$, and the standard representation of $\bT$ as $t_1,\dots,t_k$, $w_1,\dots,w_{n-k}$, and $a_1,\dots,a_n$, respectively.  It is well known that the equivariant cohomology of $X_{k,n}$ is generated by the collection of these classes. 

Similarly well-known is the so-called GKM description of the equivariant cohomology due to \cite{GKM}

\[
H_\bT^\bullet(X_{k,n})\cong\Big\{ (f_x)\in\bigoplus_{x\in X_{k,n}^\bT}H^\bullet_\bT(\{x\})\ \Big|\ \chi_E\ |\ (f_x-f_y) \Big\}
\]
where $\chi_E$ is the character labeling the directed edge $E: x \to y$ in the moment graph if there is such an edge. We provide a combinatorial description of the moment graph in \S\ref{mgraph}, which one may use as an aid to compute stable envelope matrices.

The projection 

$$
\pi: X_{k,n} \to \{\text{pt}\}
$$
is not proper, so there is no push-forward in cohomology. In such case, it is common to \textit{define} the integral via equivariant localization. Let $\widetilde{\Stab}(p)\in H^*_{\C^{*}_{\hbar}}(X_{k,n})$ denote the image of $\Stab(p)$ under the map
\[
j:H^{*}_{\C^{*}_{\hbar}}(X_{k,n})\to H^{*}_{\bT}(X_{k,n})
\]
induced by inclusion of the subtorus $\C^{*}_{\hbar}\hookrightarrow\bT$ and $\text{Id}:X_{k,n}\to X_{k,n}$, which is equivariant with respect to the torus inclusion. We can then define the integral via equivariant localization over $\C^{*}_{\hbar}$. The $\C^{*}_{\hbar}$-fixed locus is $Gr(k,n)$, so 

\begin{equation}\label{stabp-int-def}
\int_{X_{k,n}}\Stab(p):= \int_{Gr(k,n)}\frac{\widetilde{\Stab}(p)|_{Gr(k,n)}}{e(N(Gr(k,n))} = \int_{Gr(k,n)}\frac{\widetilde{\Stab}(p)|_{Gr(k,n)}}{e(X_{k,n})}
\end{equation}
where $N(-)$ denotes the normal bundle to $(-)$, which in this case is nothing but $X_{k,n}$, viewed as a bundle over $Gr(k,n)$. This integral is finite because $Gr(k,n)$ is compact, which is the motivation for localizing over $\C^*_\hbar$. In fact, this integral must be an integer times some power of $\hbar$. First we prove it is an integral power series in $\hbar^{-1}$.

\begin{proposition}\label{int-is-integer}
We have
\[
\int_{X_{k,n}}\Stab(p)\in\mathbb{Z}[[\hbar^{-1}]].
\] 
\end{proposition}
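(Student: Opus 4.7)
The plan is to compute $\int_X \Stab(p)$ directly through the $\C^*_\hbar$-equivariant integration formula (\ref{stabp-int-def}) and expand the result as a formal power series in $\hbar^{-1}$, tracking integrality of coefficients at every step. Since the $\C^*_\hbar$-fixed locus of $X=T^*Gr(k,n)$ is the zero section $Gr(k,n)$ itself, we have
\[
\int_X \Stab(p) \;=\; \int_{Gr(k,n)} \frac{\widetilde{\Stab}(p)|_{Gr(k,n)}}{e(T^*Gr(k,n))},
\]
where $e(T^*Gr(k,n))$ is the $\C^*_\hbar$-equivariant Euler class of the normal bundle. Using $T^*Gr(k,n) \cong \mathcal{S} \otimes \mathcal{Q}^*$, this Euler class is $\prod_{i,j}(t_i - w_j + \hbar)$, with leading $\hbar$-term $\hbar^{k(n-k)}$.

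First I would expand the denominator formally in $\hbar^{-1}$:
\[
\frac{1}{e(T^*Gr(k,n))} \;=\; \hbar^{-k(n-k)} \prod_{i,j} \Bigl(1 + \tfrac{t_i - w_j}{\hbar}\Bigr)^{-1} \;=\; \hbar^{-k(n-k)} \sum_{m \geq 0} c_m \,\hbar^{-m},
\]
where each $c_m$ is a symmetric integer polynomial in the Chern roots of $\mathcal{S}$ and $\mathcal{Q}$, hence an integer class in $H^{2m}(Gr(k,n);\Z)$. Second, I would establish that $\widetilde{\Stab}(p)|_{Gr(k,n)}$ is itself an integer polynomial in $t_i$, $w_j$, and $\hbar$. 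The cleanest route is to invoke the explicit weight function formula recalled in \S\ref{weight-fcn-section}, which presents $\Stab(p) \in H^*_\bT(X)$ as an integer polynomial in the tautological Chern roots and the equivariant parameters $a_l, \hbar$; the map $j$ simply substitutes $a_l \mapsto 0$, preserving integrality.

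Multiplying these two ingredients and integrating then produces a formal power series in $\hbar^{-1}$ whose coefficients are integrals of integer cohomology classes on the compact variety $Gr(k,n)$, and such integrals are themselves integers by standard Schubert calculus. This gives $\int_X \Stab(p) \in \Z[[\hbar^{-1}]]$ as required. The main obstacle I anticipate is the second step: a purely axiomatic argument from Theorem \ref{stab-defn} is delicate because the uniqueness axioms only guarantee rational coefficients a priori, so one really does need to commit to either the weight function presentation or the Lagrangian correspondence construction of \cite{MO} to secure integrality. As a bonus, the $\bT$-equivariant cohomological grading (with $\hbar, t_i, w_j$ all of degree two) is preserved throughout, so the final class is homogeneous of total degree $-2k(n-k)$; this forces the power series to collapse to a single term $c\,\hbar^{-k(n-k)}$ with $c \in \Z$, recovering the sharper statement from the introduction.
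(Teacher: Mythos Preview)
Your proposal is correct and follows essentially the same route as the paper: expand $1/e(X)$ as a formal power series in $\hbar^{-1}$ with coefficients that are integral symmetric functions in the Chern roots, observe that the restricted stable envelope is an integral class, and conclude by integrating integer classes over the compact $Gr(k,n)$. Your version is in fact more careful than the paper's---you explicitly justify the integrality of $\widetilde{\Stab}(p)|_{Gr(k,n)}$ via the weight function presentation (the paper simply asserts this), and your final homogeneity argument pinning down the single power $\hbar^{-k(n-k)}$ is a nice addition not present in the paper's proof, though it is stated elsewhere in the introduction.
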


\begin{proof}
Beginning with definition (\ref{stabp-int-def}), we have
\begin{align*}
\frac1{e(X_{k,n})}&=\frac1{\prod\limits_{i=1}^{n}(\hbar+y_{i})}=\frac{1}{\hbar^{n}}\prod\limits_{i=1}^{n}\frac{1}{1+\frac{y_i}{\hbar}}\\
&=\frac{1}{\hbar^{n}}\sum\limits_{j=0}^{\infty}(-1)^{j}\hbar^{-j}\mathbf{h}_{j}(y_{1},\dots,y_{n})\in H^{*}(Gr(k,n);\mathbb{Z})[[\hbar^{-1}]]
\end{align*} 
where $y_{i}$ denote the Chern roots of the vector bundle $T^*Gr(k,n)$, and $\mathbf{h}_{j}$ denotes the $j$-th complete symmetric polynomial.

Since $\mathbf{h}_{j}(y_{1},\dots,y_{n})$ and $ \widetilde{\Stab}(p)|_{Gr(k,n)}\in H^{*}(Gr(k,n);\mathbb{Z})[[\hbar^{-1}]]$, the integrand in (\ref{stabp-int-def}) also lies in $H^{*}(Gr(k,n);\mathbb{Z})[[\hbar^{-1}]]$. Since integrating a class in $H^{*}(Gr(k,n);\mathbb{Z})$ yields an integer, the result is shown.
\end{proof}

It remains to show the result of the integral is actually a \textit{monomial} in $\hbar^{-1}$, which would complete the integrality claim.  

We choose to introduce further localization over $\bT$, the full torus acting on $X_{k,n}$, for two reasons: 1) By analyzing the $\bT$-localization formula for the integral, we can prove the integral is indeed a monomial in $\hbar^{-1}$, and 2) It provides an alternative method of computing the integral, as opposed to the method outlined in Proposition \ref{int-is-integer}. This choice introduces a tradeoff: The fixed locus is now finite, but features additional parameters, the coordinates of the torus $\bA$. To recover the original quantity, we take the limit as the ``auxiliary'' equivariant parameters go to 0. This is another way of computing the integral we have defined.

\begin{proposition}\label{comm-diagram}
We have
\begin{equation}\label{comm-diagram-eqn}
\int_{X_{k,n}}\Stab(p_I) = \sum_{p_J\in X_{k,n}^\bT}\frac{\Stab(p_I)|_{p_J}}{e(T_{p_J}X_{k,n})}\Bigg|_{a_i=0}.
\end{equation}

\end{proposition}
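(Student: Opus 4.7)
The strategy is a two-step (iterated) equivariant localization. The definition of $\int_X\Stab(p)$ already localizes over $\C^*_\hbar$ to reduce to the compact Grassmannian:
\[
\int_X\Stab(p) \;=\; \int_{Gr(k,n)}\frac{\widetilde{\Stab}(p)|_{Gr(k,n)}}{e(X)}.
\]
The first move is to observe that this integrand naturally lifts to a $\bT$-equivariant class on $Gr(k,n)$: the restriction $\Stab(p)|_{Gr(k,n)}$ already lives in $H^*_\bT(Gr(k,n))$, and $e(X)$ is the Euler class of the cotangent bundle $X\to Gr(k,n)$, which is $\bT$-equivariant because $\bT$ acts on the total space with $\C^*_\hbar$ scaling the fibers. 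Denote this $\bT$-equivariant integrand by $\Phi$.

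Since $Gr(k,n)$ is compact, the pushforward
\[
I \;:=\; \int^{\bT}_{Gr(k,n)}\Phi \;\in\; H^*_{\bT}(\pt) \;\cong\; \Z[a_1,\ldots,a_n,\hbar]
\]
is a genuine polynomial, which I would compute in two ways. On one hand, proper pushforward commutes with restriction of equivariance along the subtorus inclusion $\C^*_\hbar \hookrightarrow \bT$ (which, on the cohomology of a point, is the substitution $a_i = 0$), so
\[
I\big|_{a_i=0} \;=\; \int_{Gr(k,n)}^{\C^*_\hbar}\!\widetilde{\Phi} \;=\; \int_X\Stab(p),
\]
recovering the left-hand side of the proposition. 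On the other hand, $\bT$-equivariant Atiyah--Bott localization applied to the compact $Gr(k,n)$ yields
\[
I \;=\; \sum_{p_J\in (Gr(k,n))^{\bT}}\frac{\Stab(p)|_{p_J}}{e(T_{p_J}Gr(k,n))\cdot e(X)|_{p_J}}.
\]
The $\bT$-fixed locus of $Gr(k,n)$ coincides with $X^\bT$ (all fixed points lie in the zero section), and the normal bundle of $Gr(k,n)$ inside $X$ at each $p_J$ is the cotangent fiber, so $e(T_{p_J}Gr(k,n))\cdot e(X)|_{p_J} = e(T_{p_J}X)$ as $\bT$-characters. Equating the two expressions for $I$ and restricting to $a_i=0$ delivers the claimed identity.

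The one subtle point to address is that the individual summands on the right-hand side are rational functions in the $a_i$ with a priori poles along $a_i = 0$, so the non-equivariant limit needs justification. This is automatic from the setup above: the sum equals $I$, a polynomial in $a_1,\ldots,a_n,\hbar$, so the $a$-poles of individual terms must cancel and the ``limit $\Ba \to 0$'' is literally polynomial evaluation at $a_i = 0$. I expect this cancellation---or, more precisely, its conceptual source via the $\bT$-equivariant lift of the integrand over the compact $Gr(k,n)$---to be the main point worth spelling out; the remaining ingredients (Atiyah--Bott localization and the functoriality of pushforward under restriction of equivariance) are standard.
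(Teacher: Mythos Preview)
Your argument is essentially the paper's: the paper packages the statement as commutativity of the square
\[
\begin{tikzcd}
H^{*}_{\bT}(X)\arrow[r,"p"]\arrow[d,"j"] & H^{*}_{\bT}(\pt)_{loc}\arrow[d,"\widetilde{j}"]\\
H^{*}_{\C^{*}_{\hbar}}(X)\arrow[r,"p'"] & H^{*}_{\C^{*}_{\hbar}}(\pt)_{loc}
\end{tikzcd}
\]
where the horizontal arrows are equivariant localization (over $\bT$ and over $\C^*_\hbar$ respectively) and the vertical arrows are restriction of equivariance along $\C^*_\hbar\hookrightarrow\bT$. Your two-step computation through the compact $Gr(k,n)$ is exactly what makes this square commute in practice, so the strategy is the same, just unpacked.

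One correction is needed. You assert that $I=\int_{Gr(k,n)}^{\bT}\Phi$ lies in $H^*_\bT(\pt)\cong\Z[a_1,\dots,a_n,\hbar]$ as a genuine polynomial. This is not true: the integrand $\Phi=\Stab(p)|_{Gr(k,n)}/e(X)$ does not live in $H^*_\bT(Gr(k,n))$ but only in a localization, since $e(X)$ is not invertible there. What is true (and sufficient) is that the $\bT$-weights of the cotangent fibers all have $\hbar$-component equal to $1$, so $e(X)$ becomes invertible once one inverts $\hbar$ (equivalently, expands $1/e(X)$ as a power series in $\hbar^{-1}$ with coefficients in $H^*_\bA(Gr(k,n))$, exactly as in the paper's Proposition~\ref{int-is-integer}). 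Hence $I\in\Z[a_1,\dots,a_n][[\hbar^{-1}]]$ rather than $\Z[a_1,\dots,a_n,\hbar]$. The conclusion you draw from it survives intact: $I$ has no poles in the $a_i$, so the evaluation $a_i=0$ is legitimate and the individual $a$-poles of the localization summands must cancel. With that amendment your argument is correct, and it has the merit of explaining \emph{why} the non-equivariant limit exists---a point the paper treats separately after stating the proposition.
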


\begin{proof}
Follows from the commutativity of the diagram
\[
\begin{tikzcd}
    H^{*}_{\bT}(X_{k,n})\arrow[r,"p"]\arrow[d,"j"] &H^{*}_{\bT}(pt)_{loc}\arrow[d,"\widetilde{j}"]\\
    H^{*}_{\mathbb{C}^{*}_{\hbar}}(X_{k,n})\arrow[r,"p'"] &H^{*}_{\mathbb{C}^{*}_{\hbar}}(pt)_{loc}
\end{tikzcd}
\]
The map $\widetilde{j}$ is induced by the inclusion $\mathbb{C}^{*}_{\hbar}\hookrightarrow\bT$, and the maps $p$ and $p'$ come from $\bT$-equivariant localization and $\mathbb{C}^{*}_{\hbar}$-equivariant localization respectively.
\end{proof}

As claimed, with this proposition we can then show the integrality claim.

\begin{corollary}
We have
\[
\int_{X_{k,n}}\Stab(p)\in \mathbb Z\cdot \hbar^{k(n-k)}. 
\]
\end{corollary}

\begin{proof}
By Proposition \ref{int-is-integer}, the above integral lies in $\mathbb Z[[\hbar^{-1}]]$. Using $\bT$-equivariant localization, we show in the appendix that it must further be a monomial in $\hbar^{-1}$. Therefore it is an integer times a power of $\hbar$, see \ref{limit monomial}. By cohomological degree argument, that power must be $k(n-k)$. 
\end{proof}

Let us mention here that it is not obvious that the non-equivariant limit in the RHS of (\ref{comm-diagram-eqn}) exists, as $e(T_{p_J}X_{k,n})$ often has zeros when setting $a_i=a_j$ for $i,j\in [n]$. Taking the RHS of (\ref{comm-diagram-eqn}) in isolation, it can be proven directly that the non-equivariant limit exists by showing that all terms of the form $(a_i-a_j)$ in the denominator magically cancel with terms in the numerator when the sum is combined into a single term. This method describes a refactoring/resummation in the numerator, so it is a very nontrivial proof, which we include in Appendix A. However, the simplest way to see that the non-equivariant limits exist is by the fact that it equals $\int_{X_{k,n}}\Stab(p)$.

\subsection{Equivariant Structure of $X_{k,n}$}\label{mgraph}
    In this section we recall the equivariant structure of the $\bT$-action on $X_{k,n}$, which is standard material. An introductory source on these objects and computations can be found in \cite{AF}. 

    We first discuss the $\bA$-equivariant structure of $Gr(k,n)$, as the $\bT$-equivariant structure of $X_{k,n}$ is only a slight modification of this. For every $\bT$-fixed point, $p_I$, $T_{p_I}Gr(k,n)$ decomposes into one-dimensional $\bA$-weight spaces. The tangent space itself is of the form $\Hom(S,Q)$, where $S$ is the vector space representing $p_I$ and $Q$ is the quotient $\C^n/S$, thus by tensor-hom has a basis given by 
\[
\big\{e_s^*\otimes e_q \big|\ s\in I,q\not \in I\big\}
\]
where $\{e_i\}$ is the standard basis on the underlying $\C^n$. Then the $\bA$-weight in the direction of $e^*_s\otimes e_q$ is $a_q-a_s$, where $a_i$ are the coordinates of the torus $\bA$. Each $e_s^*\otimes e_q$ induces a pair of $\bA$-weight spaces in the cotangent bundle, $X_{k,n}$: $a_q-a_s$ and $a_s-a_q$. When considering $\bT$-weights, the cotangent directions are scaled by $\hbar$. In total, the $\bT$ characters of $T_{p_I}X_{k,n}$ are $a_q-a_s,a_s-a_q+\hbar$ for every pair $(s\in I,q\not\in I)$. Note that the characters are only determined up to sign, but we fix this choice throughout. 

Having found the $\bT$-weights of $T_{p_I}X_{k,n}$, we have an explicit formula for the equivariant Euler class appearing in \S\ref{eq-loc}.
\begin{proposition}
We have
\[
e(T_{p_I}X_{k,n}) = \prod\limits_{\substack{v=1\\v\not \in I}}^n (vi_1)(vi_2)\cdots(vi_k)(i_1v)_\hbar(i_2v)_\hbar\cdots (i_kv)_\hbar.
\]
\end{proposition}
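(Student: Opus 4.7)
The plan is to read off the formula directly from the tangent weight decomposition established in the paragraph preceding the proposition. Recall that for a smooth $\bT$-variety with isolated fixed points, the equivariant Euler class $e(T_{p}X)$ at a fixed point $p$ is, by definition, the product of the $\bT$-characters appearing in the weight decomposition of $T_pX$. So the proof reduces to collecting the weights already described and rewriting them in the notation $(ij) = a_i - a_j$ and $(ij)_\hbar = a_i - a_j + \hbar$.

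Concretely, I would proceed as follows. First, invoke the preceding paragraph to decompose $T_{p_I}X$ into one-dimensional $\bT$-weight spaces indexed by pairs $(s,q)$ with $s \in I$, $q \notin I$, where each such pair contributes two weights: $a_q - a_s$ from the Grassmannian tangent direction $e_s^* \otimes e_q$, and $a_s - a_q + \hbar$ from the dual cotangent direction scaled by $\hbar$. Next, translate these weights into the shorthand of the paper: writing $I = \{i_1,\dots,i_k\}$ and $v$ for an element outside $I$, the weights become $(vi_j)$ and $(i_j v)_\hbar$ respectively. Finally, take the product over all such pairs, grouping the outer product over $v \notin I$ and the inner product over $j = 1, \dots, k$:
\[
e(T_{p_I}X) \;=\; \prod_{\substack{v=1\\ v \notin I}}^{n} \prod_{j=1}^{k} (vi_j)(i_jv)_\hbar,
\]
which, when the inner product is expanded, is exactly the stated formula.

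There is no real obstacle here; the content of the proposition is bookkeeping on top of the decomposition of $T_{p_I}X$ that was already carried out in the paragraph above. The only thing to be careful about is the sign ambiguity in each one-dimensional weight space, which the paper has already pinned down by the fixed convention on the characters of $T_{p_I}X$, so both factors $(vi_j)$ and $(i_jv)_\hbar$ appear with the specific signs written. Accordingly, I would present this as a short proof, essentially just a one-line dereference to the preceding discussion.
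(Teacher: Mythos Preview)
Your proposal is correct and matches the paper's approach exactly: the paper does not give a separate proof but simply states the formula as an immediate consequence of the weight decomposition worked out in the preceding paragraph, which is precisely what you do.
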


Let us note here that the formula for the Euler class is independent of the choice of chamber. 

The fixed points, the 1-dimensional orbits containing fixed points, and $\bT$-characters along these lines, may be assembled together into a \textit{moment graph}. This is a graph arising from any nice variety with a torus action, whose vertices are the points $X_{k,n}^{\bT}$, with an edge between $p_1$ and $p_2$ if there exists a 1-dimensional $\bT$-orbit whose closure contains $p_1$ and $p_2$. This edge is labeled with the $\bT$-weight of the this orbit, as calculated above. And if a chamber is chosen, as we have, edges can also be directed according to the attracting partial order. 

The moment graph of $X_{k,n}$ has a simple description. Its vertices are given by the fixed points of $X_{k,n}$, $p_I$. There is an edge between $p_I$ and $p_J$ if $|I\cap J|=k-1$: Thus the symmetric difference consists of two indices, denote them as $i,j$. The $\bT$-weight at $T_{p_I}X_{k,n}$ in the direction of $p_J$ is $a_j-a_i$, and each fixed point has a cotangent direction of the opposite attracting nature with inverted $\bA$-weight and an additional additive $\hbar$ factor. With the chosen cocharacter, the induced partial order is determined by the indices $i,j$: The smaller index's corresponding fixed point is attracting. For example, in the moment graph of the $\bT$-action on $X_{2,4}$, the edge pointing from $\langle 13\rangle$ to $\langle 12\rangle$, for example, has character $a_3-a_2$ at $\langle 12\rangle$, and the corresponding arrow pointing away from $\langle 12\rangle$ has character $a_2-a_3+\hbar$. The $\bA$-weights are only determined up to sign, but the zero section weights and cotangent weights must have opposite signs.

\subsection{Weight functions}\label{weight-fcn-section}
Weight functions are rational functions introduced in \cite{TV} to describe certain solutions to quantum Knizhnik-Zamolodchikov equations. Later, \cite{RTV} showed that these weight functions may be interpreted as representatives for the stable envelope classes. The rest of this section summarizes the results we need from this paper, and gives the delayed proof from subsection \S\ref{stabsection}. 

The weight functions depend on a cocharacter or chamber, encoded by a permutation $\Sigma$, and we use $\sigma$ as the symmetrizing variable (our convention differs slightly from the source).

\begin{definition}
The \textit{weight function} for the standard chamber, $W(p_I)$, for $p_I\in X_{k,n}^\bT$ is 

\[
W(p_I):=\sum\limits_{\sigma \in S_k}\dfrac{\prod\limits_{r = 1}^k \prod\limits_{\alpha = 1}^{i_{r} - 1} (a_{\alpha} - t_{\sigma(r)}) \prod\limits_{\beta = i_r + 1}^n (t_{\sigma(r)} - a_{\beta} + \hbar) }{\prod\limits_{1 \leq \ell < m \leq k} (t_{\sigma(\ell)} - t_{\sigma(m)}) (t_{\sigma (\ell)} - t_{\sigma(m)} + \hbar)}
\]
and for another chamber $\chamb^\tau$ (in which $\{a_{\tau(1)}\leq a_{\tau(2)}\dots\leq a_{\tau(n)}\}$) is defined as
\[
W_{\chamb^\tau}(p_{I}):=\tau\left(W\left(p_{\tau^{-1}(I)}\right)\right)
\]
where $\tau:a_{i}\mapsto a_{\tau(i)},\ 1\leq i\leq n$.
\end{definition}

These are rational functions in variables $a,t,\hbar$, satisfying the property that, when the $t$ variables are evaluated at subsets of the $a$ variables corresponding to a fixed point $p_J$, $W_{\chamb^\tau}(p_I)|_{t_i=a_{J_i}}$, becomes a polynomial, and further that the collection 
\[
\big(W_{\chamb^\tau}(p_I)|_{t_i=a_{J_i}}\big)_{p_J\in X_{k,n}^\bT}
\]
defines a class in $H_\bT^*(X_{k,n})$, i.e. satisfies the GKM relations corresponding to the moment graph, denoted $[W_{\chamb^\tau}(p_I)]$.  

That weight functions are representatives of stable envelope classes is captured by the theorem

\begin{theorem}[\cite{RTV}]\label{weight-fcns-stab}
For all fixed points $I\subset [n]$, 
$$
[W_{\chamb^\tau}(p_I)]=\Stab_{\chamb^\tau}(p_I)
$$
where $[W_{\chamb^\tau}(p_I)]$ denotes the equivariant cohomology class of $W_{\chamb^\tau}(p_I)$. 
\end{theorem}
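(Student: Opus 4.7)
The plan is to verify that the class $[W_{\chamb^\tau}(p_I)]$ satisfies the three defining axioms of the stable envelope given in Theorem~\ref{stab-defn}, and then invoke the uniqueness asserted there. First I would reduce to the standard chamber: since $W_{\chamb^\tau}(p_I) := \tau(W(p_{\tau^{-1}(I)}))$ and stable envelopes transform in the same way under a change of chamber (the class-level analogue of Proposition~\ref{chamber-dependence}, proved by the same symmetry), it suffices to establish $[W(p_I)] = \Stab_{\chamb}(p_I)$ for the standard chamber. Throughout, write $I = \{I_1 < \cdots < I_k\}$ and recall that $p_J \in \mathrm{Attr}^f(p_I)$ in the standard chamber iff $J_r \geq I_r$ for every $r$.

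For axiom (ii), normalization, substitute $t_j = a_{I_j}$ into the defining sum. For any $\sigma \ne \id$ pick $r$ with $\sigma(r) < r$; then $I_{\sigma(r)} \leq I_{r-1} < I_r$, so the factor $(a_{I_{\sigma(r)}} - t_{\sigma(r)}) = 0$ appears in the numerator while the denominator is generically nonzero. Only $\sigma = \id$ contributes, and after cancelling the common factors $(a_{I_\ell} - a_{I_m})$ and $(a_{I_\ell} - a_{I_m} + \hbar)$ for $\ell < m$, what remains is
\[
\prod_{r=1}^{k}\prod_{\substack{v \notin I \\ v < I_r}}(a_v - a_{I_r})\prod_{\substack{v \notin I \\ v > I_r}}(a_{I_r} - a_v + \hbar),
\]
which matches $e(N^-_{p_I})$ under the orientation of \S\ref{mgraph}.

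For axiom (i), the support condition, I would show $W(p_I)|_{t_j = a_{J_j}} = 0$ whenever $p_J \notin \mathrm{Attr}^f(p_I)$, i.e.\ whenever some $r$ satisfies $J_r < I_r$. The key is a pigeonhole argument: among the $r$ distinct values $\sigma^{-1}(1),\ldots,\sigma^{-1}(r)$, at least one $s$ lies in $\{r, r+1, \ldots, k\}$. Then $\sigma(s) \leq r$, whence $J_{\sigma(s)} \leq J_r < I_r \leq I_s$, so $J_{\sigma(s)} \in \{1,\ldots,I_s-1\}$ and the numerator factor $(a_{J_{\sigma(s)}} - t_{\sigma(s)})$ vanishes in the $\sigma$-th summand, while the denominator remains nonzero. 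Finally, for axiom (iii) one tracks $\bA$-degrees: each surviving summand at $p_J \ne p_I$ has total degree $k(n-k)$ in $(a,\hbar)$, but acquires strictly positive $\hbar$-degree through the denominator factors $(a_{J_{\sigma(\ell)}} - a_{J_{\sigma(m)}} + \hbar)$ whose $\hbar$-component is not matched by the numerator, forcing $\deg_\bA < k(n-k) = \tfrac{1}{2}\dim X$. Uniqueness in Theorem~\ref{stab-defn} then gives $[W(p_I)] = \Stab(p_I)$.

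The main obstacle is axiom (i): axiom (ii) has only one surviving summand, and axiom (iii) is a bookkeeping count, but (i) requires \emph{every} one of the $k!$ terms to vanish simultaneously, and no single numerator factor does so uniformly in $\sigma$. The pigeonhole step above is the essential combinatorial content, and one must also ensure that the chosen vanishing factor is not cancelled by a denominator zero, which follows from the genericity of the $a_i$ and the independence of $\hbar$.
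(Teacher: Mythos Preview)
The paper does not prove this theorem; it is quoted from \cite{RTV} (with a remark about a normalization factor $c_\lambda$), so there is no in-paper proof to compare against. Your overall strategy---verify the three axioms of Theorem~\ref{stab-defn} for the standard chamber and invoke uniqueness---is exactly the standard route, and your arguments for axioms (i) and (ii) are correct: the pigeonhole step for (i) and the single-survivor computation for (ii) are both sound.

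The gap is in axiom (iii). Your sentence ``acquires strictly positive $\hbar$-degree through the denominator factors $(a_{J_{\sigma(\ell)}} - a_{J_{\sigma(m)}} + \hbar)$'' is backwards and does not constitute a proof. After summing over $\sigma$ the restriction $W(p_I)|_{p_J}$ is a \emph{polynomial}, so there is no denominator to speak of; and for a single term, having $\hbar$ in the denominator would, if anything, \emph{raise} the $a$-degree of the quotient, not lower it. The forced $\hbar$-factor actually lives in the \emph{numerator}: for any $\sigma$ whose summand survives you have $J_{\sigma(r)} \geq I_r$ for all $r$, and since $J \neq I$ as sets there is some $s$ with $J_{\sigma(s)} > I_s$. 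Then $\beta = J_{\sigma(s)}$ lies in the range $I_s+1,\ldots,n$, so the factor $(a_{J_{\sigma(s)}} - a_\beta + \hbar)\big|_{\beta = J_{\sigma(s)}} = \hbar$ appears in the numerator of that term. Consequently every summand, evaluated term-by-term at $\hbar=0$ (the denominators $\prod(a_{J_{\sigma(\ell)}}-a_{J_{\sigma(m)}})^2$ are nonzero there), vanishes; hence the polynomial $W(p_I)|_{p_J}$ is divisible by $\hbar$, and $\deg_\bA W(p_I)|_{p_J} < k(n-k) = \tfrac12\dim X$. With this correction your outline becomes a complete proof. A secondary point you leave implicit is that the tuple $(W(p_I)|_{p_J})_J$ really does define a class in $H^\bullet_\bT(X)$ (polynomiality and the GKM congruences); the paper asserts this just before the theorem, but it is part of what \cite{RTV} establish.
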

In the source, they have an additional factor $c_\lambda$ which accounts for different normalization. 

In particular, the stable envelope restriction to a fixed point can be computed by evaluating the weight function at the same fixed point,
\[
W_\Sigma(p_I)|_{p_J} = \Stab_\Sigma(p_I)|_{p_J}.
\]
With this theorem, we can prove the Proposition stated in Subsection \S\ref{stabsection} regarding the effect of choosing a different chamber, and the result is that a different chamber will simply permute which integral is associated to which fixed point, with $\bA$-parameter substitutions. In fact this proof shows that these integrals agree at the level of $\bT$-equivariant localization, before taking limits. 

\begin{proposition}\label{chamber-dependence}
Let $\chamb$ be the standard chamber, and $\tau \in S_n$ be a permutation representing another chamber, $\chamb^\tau$. Then
\[
\int_{X_{k,n}}\Stab_{\chamb^{\tau}}(p) = \tau\left( \int_{X_{k,n}} \Stab_{\chamb}(\tau^{-1}(p)) \right)
\]
where $\tau(p) = \{\tau(i)\ |\ i \in p\}$, and the action of $\tau$ on the left hand side sends $a_i \mapsto a_{\tau(i)}$.
\end{proposition}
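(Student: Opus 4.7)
The plan is to prove the equality before taking any non-equivariant limit, i.e.\ to show that the two sides already agree as rational functions after $\bT$-equivariant localization, and then deduce the proposition by taking $a_i\to 0$. By Proposition~\ref{comm-diagram} both sides are expressible as such limits, so it suffices to show
\[
\sum_{J}\frac{\Stab_{\chamb^{\tau}}(p)\big|_{p_{J}}}{e(T_{p_{J}}X)}
\;=\;
\tau\!\left(\sum_{J'}\frac{\Stab_{\chamb}(\tau^{-1}(p))\big|_{p_{J'}}}{e(T_{p_{J'}}X)}\right),
\]
where $\tau$ acts by $a_i\mapsto a_{\tau(i)}$. The non-equivariant limit obviously commutes with this action, since $\tau$ just permutes coordinates that are being sent to $0$.

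First I would replace the stable envelope restrictions by weight functions via Theorem~\ref{weight-fcns-stab}. The definition $W_{\chamb^{\tau}}(p_{I}):=\tau(W(p_{\tau^{-1}(I)}))$ directly gives, for any fixed point $p_J$,
\[
\Stab_{\chamb^{\tau}}(p)\big|_{p_{J}} \;=\; \tau\!\left(W(p_{\tau^{-1}(p)})\right)\Big|_{t_{r}=a_{J_{r}}}.
\]
The key bookkeeping step is that $W(p_I)$ is symmetric in the $t_r$'s (which one sees directly from the definition by reindexing the sum over $S_k$), so restricting to $p_J$ is insensitive to the ordering of $J$. Using this I can commute the $\tau$-action with the $t$-substitution: substituting $t_r=a_{J_r}$ after applying $\tau$ to the $a$-variables gives the same result as substituting $t_r = a_{\tau^{-1}(J)_r}$ first (any ordering of $\tau^{-1}(J)$) and then applying $\tau$. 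This yields the identity
\[
\Stab_{\chamb^{\tau}}(p)\big|_{p_{J}} \;=\; \tau\!\left(\Stab_{\chamb}(\tau^{-1}(p))\big|_{p_{\tau^{-1}(J)}}\right).
\]

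Next I would handle the Euler class. Using the explicit product formula for $e(T_{p_{I}}X)$ from Section~\ref{mgraph}, a direct inspection shows $\tau(e(T_{p_{\tau^{-1}(J)}}X))=e(T_{p_{J}}X)$: applying $\tau$ relabels the product over $v\notin \tau^{-1}(J)$ and $j'\in\tau^{-1}(J)$ into a product over $w\notin J$ and $j\in J$, which is exactly $e(T_{p_J}X)$. Plugging this into the localization sum and reindexing $J'=\tau^{-1}(J)$ lets me pull $\tau$ outside the sum, producing the desired identity.

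The only real subtlety is the commutation of $\tau$ with the substitution $t_r=a_{J_r}$; this is the step I would expect a careful reader to want verified, and it is handled cleanly by the $S_k$-symmetry of $W(p_I)$ in the $t$-variables. Everything else is formal: the Euler class calculation and the reindexing of the sum are routine, and taking $a_i\to 0$ at the end commutes with the coordinate-permutation $\tau$.
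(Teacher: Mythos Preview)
Your proposal is correct and follows essentially the same route as the paper: both arguments work at the level of the $\bT$-equivariant localization sum, invoke the definition $W_{\chamb^\tau}(p_I)=\tau(W(p_{\tau^{-1}(I)}))$ together with Theorem~\ref{weight-fcns-stab}, verify $\tau(e(T_{p_{\tau^{-1}(J)}}X))=e(T_{p_J}X)$ from the explicit product formula, and then reindex the sum by $J\leftrightarrow\tau^{-1}(J)$. Your write-up is in fact a bit more careful than the paper's, since you explicitly isolate the one nontrivial step---commuting $\tau$ with the substitution $t_r=a_{J_r}$---and justify it via the $S_k$-symmetry of $W(p_I)$ in the $t$-variables, a point the paper leaves implicit.
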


\begin{proof}
For convenience, we swap $p\leftrightarrow \tau(p)$. Recall that 
\[
W_{\tau}(\tau(p)):=\tau(W(\tau^{-1}(\tau(p))))=\tau(W(p)).
\]
Computing the RHS by $\bT$-equivariant localization,
\[
\int_{X_{k,n}}\Stab_{\chamb^{\tau}}(\tau(p))=\sum_{p_{J}\in X_{k,n}^{\bT}}\frac{W_{\tau}(\tau(p))|_{p_{J}}}{e(T_{p_{J}}X_{k,n})}.
\]
We can write any $k$-subset $\{\theta_{1},\dots,\theta_{k}\}$ as $\{\tau(\tau^{-1}(\theta_{1})),\dots,\tau(\tau^{-1}(\theta_{k}))\}$\\\\
So we can index the sum by $\tau(p_{J})$) instead, and the integral can be written as
\[
\sum_{p_{J}\in X_{k,n}^{\bT}}\frac{(\tau(W(p)))|_{\tau(p_{J})}}{e(T_{\tau(p_{J})}X_{k,n})}.
\]
The LHS can be written similarly as
\[
\tau\left( \int_{X_{k,n}} \Stab_{\chamb}(p) \right)=\tau\left(\sum_{p_{J}\in X_{k,n}^{\bT}}\frac{W(p)|_{p_{J}}}{e(T_{p_{J}}X_{k,n})}\right).
\]
Applying $\tau$ to the denominator we obtain
\begin{align*}
\tau(e(T_{p_{J}}X_{k,n}))&=\tau\left(\prod\limits_{m=1}^{k}\prod\limits_{v\notin J}(a_{v}-a_{j_{m}})(a_{j_{m}}-a_{v}+\hbar)\right)\\&=\prod\limits_{m=1}^{k}\prod\limits_{v\notin J}(a_{\tau(v)}-a_{\tau(j_{m})})(a_{\tau(j_{m})}-a_{\tau(v)}+\hbar)\\
&=e(T_{\tau(p_{J})}X_{k,n}).
\end{align*}
Hence the LHS can be written as
\begin{align*}
\tau\left( \int_{X_{k,n}} \Stab_{\chamb}(p) \right)&=\sum_{p_{J}\in X_{k,n}^{\bT}}\frac{\tau(W(p)|_{p_{J}})}{e(T_{\tau(p_{J})}X_{k,n})}\\
&=\sum_{p_{J}\in X_{k,n}^{\bT}}\frac{(\tau(W(p)))|_{\tau(p_{J})}}{e(T_{\tau(p_{J})}X_{k,n})}\\
&=\sum_{p_{J}\in X_{k,n}^{\bT}}\frac{(\tau(W(p)))|_{p_{J}}}{e(T_{p_{J}}X_{k,n})},
\end{align*}
as required.
\end{proof}

\section{Calculation of Integral}\label{int-calc}

For convenience, we compute
\[
\hbar^{k(n-k)}\int_{X_{k,n}}\Stab(p_{I})
\]
so that the limit is just a number. By Proposition (\ref{comm-diagram}), the integral may equivalently be computed by localization over $\bT$ and taking $\textbf{a}\to 0$,
\[
\hbar^{k(n-k)}\int_{X_{k,n}}\Stab(p_{I})=\hbar^{k(n-k)}\sum_{p_J\in X_{k,n}^\bT}\frac{\Stab(p_I)|_{p_J}}{e(T_{p_J}X_{k,n})}\Bigg|_{a_i=0} 
\]
From Section \S\ref{weight-fcn-section}, we replace $\Stab(p_I)$ with the corresponding weight function, and the Euler classes are computed in \S\ref{mgraph}. Then
\begin{align*}\hbar^{k(n-k)}\int_{X_{k,n}}\Stab(p_{I})&=\hbar^{k(n-k)}\sum\limits_{p_{J}\in X_{k,n}^{\bT}}\frac{W(p_{I})|_{p_{J}}}{e(T_{p_{J}}X_{k,n})}\\
&=\sum_{p_J\in X_{k,n}^T}\dfrac{\sum\limits_{\sigma \in S_k} \dfrac{\prod\limits_{r = 1}^k \prod\limits_{\alpha = 1}^{i_{r} - 1} (a_{\alpha} - a_{j_{\sigma(r)}}) \prod\limits_{\beta = i_r + 1}^n (a_{j_{\sigma(r)}} - a_{\beta} + \hbar) }{\prod\limits_{1 \leq \ell < m \leq k} (a_{j_{\sigma(\ell)}} - a_{j_{\sigma(m)}}) (a_{j_{\sigma(\ell)}} - a_{j_{\sigma(m)}} + \hbar)}}{\prod\limits_{v\not \in J} \prod\limits_{p = 1}^{k}(a_v - a_{j_p}) (a_{j_p}-a_v+\hbar)}.
\end{align*}
We make the substitution $a_{s}\mapsto sz\hbar,\ 1\leq s\leq n$ to obtain
\begin{equation}\label{eq:integral}
\sum\limits_{p_{J}\in X_{k,n}^{\bT}}\frac{\sum\limits_{\sigma\in S_{k}}\frac{\prod\limits_{r=1}^{k}\prod\limits_{\alpha=1}^{i_{r}-1}(z(\alpha-j_{\sigma(r)}))\prod\limits_{\beta=i_{r}+1}^{n}(z(j_{\sigma(r)}-\beta)+1)}{\prod\limits_{1\leq l<m\leq k}(z(j_{\sigma(l)}-j_{\sigma(m)}))(z(j_{\sigma(l)}-j_{\sigma(m)})+1)}}{\prod\limits_{p=1}^{k}\prod\limits_{v\notin J}(z(v-j_{p}))(z(j_{p}-v)+1)}.
\end{equation}
It is sufficient to compute the limit $z\to 0$ of the expression above.\\\\
Fixed points of $X_{k,n}$ can be described as ordered tuples of distinct integers $1\leq i_{1}<\dots<i_{k}\leq n$. We introduce a partial order on fixed points by declaring $p_J\geq p_I$ if $j_r\geq i_r$ for all $r=1,\dots,k$. Then we can rewrite \eqref{eq:integral} as a single sum over tuples as
\[\sum\limits_{p_{J}\geq p_{I}}\frac{\prod\limits_{r=1}^{k}\prod\limits_{\alpha=1}^{i_{r}-1}(z(\alpha-j_{r}))\prod\limits_{\beta=i_{r}+1}^{n}(z(j_{r}-\beta)+1)}{\prod\limits_{1\leq l<m\leq k}(z(j_{l}-j_{m}))(z(j_{l}-j_{m})+1)\prod\limits_{p=1}^{k}\prod\limits_{v\notin J}(z(v-j_{p}))(z(j_{p}-v)+1)}.
\]
We can reduce to the case where the tuple $p_{J}\geq p_{I}$ since in the original expression if we have a permutation and a fixed point (thought of as a set) so that for some $r$ we have $j_{\sigma(r)}<i_{r}$, then $j_{\sigma(r)}\leq i_{r}-1$, which means that the first product in the numerator vanishes, and the term in the inner sum (over $\sigma\in S_{k}$) is 0. Hence the only terms that can possibly survive are for fixed points $\{j_{1},\dots j_{k}\}$ and permutations $\sigma\in S_{k}$ so that $j_{\sigma(r)}\geq i_{r}\ \forall 1\leq r\leq k$, which is equivalent to requiring that $p_{J}\geq p_{I}$ as tuples.  
After some algebraic manipulation we have

\[
\frac{(-1)^{|I|-k}}{z^{\binom{k+1}{2}+k(n-k)+|I|}\binom{|I|-k}{i_{1}-1,\ i_{2}-1,\dots,\ i_{k}-1}}\sum\limits_{p_{J}\geq p_{I}}\frac{\prod\limits_{r=1}^{k}\binom{j_{r}-1}{i_{r}-1}\prod\limits_{1\leq l<m\leq k}(1+(j_{m}-j_{l})z)}{\prod\limits_{r=1}^{k}\prod\limits_{v=j_{r}-i_{r}}^{j_{r}-1}(1+vz)\prod\limits_{1\leq l<m\leq k}(j_{l}-j_{m})\prod\limits_{p=1}^{k}\prod\limits_{v\notin J}(v-j_{p})}.
\]
Here the sum is over tuples $p_{J}$ with distinct entries, with each entry at most $n$, with $p_{J}$ entrywise greater than $p_{I}$, and $|I|:=i_{1}+\dots+i_{k}$ and $\binom{|I|-k}{i_{1}-1,\ i_{2}-1,\dots,\ i_{k}-1}$ denotes the multinomial coefficient.

The nonequivariant limit $a_{i}\to 0,\ 1\leq i\leq n$ can be obtained from the limit $z\to 0$. We now set $u:=\frac{1}{z}$ and take the limit $u\to\infty$. After rewriting $\Gamma(k+1)=k!$, $\frac{\Gamma(u+n+1)}{\Gamma(u)}=(u+n)(u+n-1)\dots u$ and further cancellation yields
\[(-1)^{|I|-k}u^{k(n-k+1)-|I|}\sum\limits_{p_{J}\geq p_{I}}\frac{(-1)^{|J|}\prod\limits_{1\leq l<m\leq k}(u+j_{m}-j_{l})(j_{m}-j_{l})}{\prod\limits_{r=1}^{k}\left(\Gamma(n-j_{r}+1)\Gamma(j_{r}-i_{r}+1)\right)}\prod\limits_{r=1}^{k}\frac{\Gamma(u+j_{r}-i_{r})}{\Gamma(u+j_{r})}.
\]
Since we are now taking the limit $u\to\infty$, we can employ the asymptotic expansion for the ratio of gamma functions given by \cite[\href{https://dlmf.nist.gov/5.11.iii}{5.11.(iii)}]{NIST:DLMF}
\[
\frac{\Gamma(u+a)}{\Gamma(u+b)}\sim u^{a-b}\sum\limits_{k=0}^{\infty}\frac{G_{k}(a,b)}{u^{k}}
\]
where 
\[
G_{k}(a,b)=\binom{a-b}{k}B_{k}^{(a-b+1)}(a).
\]
We also have a generating function for these generalized Bernoulli polynomials $B^{(l)}_{k}$ given by \cite{Temme:1995:BPO}

\[\left(\frac{t}{e^{t}-1}\right)^{l}e^{xt}=\sum\limits_{k=0}^{\infty}B^{(l)}_{k}(x)\frac{t^{k}}{k!}.
\]
We can now write each of the fractions $\frac{\Gamma(u+j_{r}-i_{r})}{\Gamma(u+j_{r})}$ as a power series in $\frac{1}{u}$. Since the $u\to\infty$ limit exists, and terms that have a negative exponent for $u$ go to 0 in the limit, the only term that contributes to the limit is the coefficient of $u^{0}$, which comes from the $u^{-k(n-k+1)+|I|}$ coefficient of 
\[\sum\limits_{J\geq I}\frac{(-1)^{|J|+|I|-k}\prod\limits_{1\leq l<m\leq k}(u+j_{m}-j_{l})(j_{m}-j_{l})}{\prod\limits_{r=1}^{k}\left(\Gamma(n-j_{r}+1)\Gamma(j_{r}-i_{r}+1)\right)}\prod\limits_{r=1}^{k}\frac{\Gamma(u+j_{r}-i_{r})}{\Gamma(u+j_{r})}.
\]
Using the definition of $G_{k}(a,b)$ we observe that 
\[G_{k}(a,b)=\left[\frac{x^{k}t^{k}y^{b-a-1}}{(b-a+k-1)!k!}\right]e^{y-x}\left(\frac{e^{t}-1}{t}\right)^{b-a-1}e^{at}.
\]
This can be seen via standard generating function arguments as in \cite{W}.
The product 
\[
\prod\limits_{1\leq l<m\leq k}(u+j_{m}-j_{l})
\]
can be expanded in terms of symmetric polynomials as 
\[\prod\limits_{1\leq l<m\leq k}(u+j_{m}-j_{l})=\sum\limits_{d=0}^{\binom{k}{2}}e_{\binom{k}{2}-d}(\Delta(J))u^{d}\] 
where 
\[
\Delta(J):=\{j_{m}-j_{l}|1\leq l<m\leq k\}.
\]
Hence the coefficient of $u^{-k(n-k+1)+|I|}$ can be calculated as 
\[\sum\limits_{J\geq I}\frac{(-1)^{|J|}\prod\limits_{1\leq l<m\leq k}(j_{m}-j_{l})}{\prod\limits_{r=1}^{k}(n-j_{r})!(j_{r}-i_{r})!}\times\left(\sum\limits_{d=0}^{\binom{k}{2}}e_{\binom{k}{2}-d}(\Delta(J))\times\left(\sum_{\substack{\lambda\vdash_{wc}\ k(n-k+1)+d-|I|\\ l(\lambda)=k}}\left(\prod\limits_{s=1}^{k}G_{\lambda_{s}}(j_{s}-i_{s},j_{s})\right)\right)\right)\]
where the sum \[\lambda\vdash_{wc} k(n-k+1)+d-|I|\] means that $\lambda$ is a weak composition of $k(n-k+1)+d-|I|$. From our description of $G_{p}(a,b)$, we can explicitly describe the terms $G_{\lambda_{s}}(j_{s}-i_{s},j_{s})$ in the innermost product:
\[
G_{\lambda_{s}}(j_{s}-i_{s},j_{s})=(i_{s}+\lambda_{s}-1)!\lambda_{s}![x^{\lambda_{s}}y^{i_{s}-1}t^{\lambda_{s}}]e^{y-x}\left(\frac{e^{t}-1}{t}\right)^{i_{s}-1}e^{t(j_{s}-i_{s})}
\]
\[
=(i_{s}+\lambda_{s}-1)!\lambda_{s}!\frac{(-1)^{\lambda_{s}}}{\lambda_{s}!}\times\frac{1}{(i_{s}-1)!}[t^{\lambda_{s}+i_{s}-1}]\left((e^{t}-1)^{i_{s}-1}e^{t(j_{s}-i_{s})}\right).
\]
An application of the binomial theorem yields
\[G_{\lambda_{s}}(j_{s}-i_{s},j_{s})=(-1)^{\lambda_{s}}\frac{1}{(i_{s}-1)!}\sum\limits_{p=0}^{i_{s}-1}(-1)^{i_{s}-1-p}(p+j_{s}-i_{s})^{\lambda_{s}+i_{s}-1}.
\]
Thus we obtain

\begin{theorem}\label{lim-calc}The following equality holds:

\begin{equation*}
    \lim_{\textbf{a}\to 0}\hbar^{k(n-k)}\left[\sum_{p_J\in X_{k,n}^\bT}\frac{\Stab(p_I)}{e(T_{p_J}X_{k,n})}\right]=(-1)^{k(n-k)-|I|}\sum\limits_{J\geq I}A_{J}B_{J}
    \end{equation*}
with
\[A_J := (-1)^{|J|}e_{\binom{k}{2}}(\Delta (J)) \prod _{r=1}^{k} \frac{1}{(n-j_r)!(j_r-i_r)!}\]
and
\[B_{J}:= \sum_{d=0}^{\binom{k}{2}} (-1)^d e_{\binom{k}{2}-d}(\Delta(J)) \sum_{\substack{\lambda\vdash_{wc}\ k(n-k+1)+d-|I|\\ l(\lambda)=k}}\left(\prod_{s = 1}^{k} \sum_{p = 0}^{i_s-1} \frac{(-1)^{p}(p + j_s - i_s )^{i_s + \lambda_s-1}}{(i_s-1-p)!p!}\right).
\]
\end{theorem}
The quantity
\begin{equation}\label{closed-form}
(-1)^{k(n-k)-|I|}\sum\limits_{J\geq I}A_{J}B_{J}
\end{equation}
is an explicit, combinatorial closed form for the integral, depending only on $I,n,k$. 

A priori, the expression in (\ref{closed-form}) is defined only for subsets $I$ of $\{1,\dots,n\}$ defining a fixed point of $X_{k,n}$. However (\ref{closed-form}) can also be evaluated at $k$-tuples $I=(i_{1}\leq\dots\leq i_{k})$ with $i_{1}=0, i_{k}=n+1$, or $i_{j}=i_{j+1}$ for some unique $j$, which we will utilize in Section \S\ref{4.1}. 
\begin{corollary}
By Proposition (\ref{comm-diagram}), we have
\[
\hbar^{k(n-k)}\int_{X_{k,n}}\Stab(p_I) = (-1)^{k(n-k)-|I|}\sum_{J\geq I}A_J B_J
\]
with the same notation.
\end{corollary}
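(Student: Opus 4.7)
The plan is to begin by invoking Proposition \ref{comm-diagram} to reexpress the integral as the non-equivariant limit of the $\bT$-equivariant localization sum, and then to substitute $\Stab(p_I)|_{p_J} = W(p_I)|_{p_J}$ using Theorem \ref{weight-fcns-stab} together with the Euler-class formula from \S\ref{mgraph}. This turns the task into computing $\lim_{a_i \to 0}$ of an explicit rational function in $a_1, \ldots, a_n, \hbar$, where the numerator is a sum over $\sigma \in S_k$ of a product of linear factors and the denominator is a product of tangent weights.

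To handle the multi-variable limit uniformly, I would make the substitution $a_s \mapsto s z \hbar$, reducing everything to a single-variable limit $z \to 0$; the $\hbar^{k(n-k)}$ prefactor cancels the $\hbar$-dependence so the answer is a pure number. Before passing to the limit, I would simplify the sum by noting that whenever a pair $(\sigma, p_J)$ has $j_{\sigma(r)} < i_r$ for some $r$, the numerator factor $\prod_{\alpha=1}^{i_r-1}(a_\alpha - a_{j_{\sigma(r)}})$ vanishes, since one of its factors is zero. This collapses the double sum over $(\sigma, J)$ into a single sum over tuples $p_J \geq p_I$ (componentwise), indexed directly by choices of $J$.

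The core of the argument is an asymptotic analysis. Setting $u = 1/z$ and letting $u \to \infty$, each Gamma-ratio $\Gamma(u + j_r - i_r)/\Gamma(u + j_r)$ admits the classical expansion
\[
\frac{\Gamma(u+a)}{\Gamma(u+b)} \sim u^{a-b} \sum_{m \geq 0} \frac{G_m(a,b)}{u^m}, \qquad G_m(a,b) = \binom{a-b}{m} B_m^{(a-b+1)}(a),
\]
where $B_m^{(l)}$ are generalized Bernoulli polynomials. I would then expand the shifted Vandermonde $\prod_{l<m}(u + j_m - j_l)$ in elementary symmetric polynomials of the difference multiset $\Delta(J) = \{j_m - j_l : l < m\}$, producing the index $d \in \{0, \ldots, \binom{k}{2}\}$ that appears in $B_J$. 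Extracting the constant term in $u$ from the combined expression is equivalent to extracting the coefficient of $u^{-k(n-k+1) + |I|}$ from the pre-Vandermonde series, which forces the powers of $1/u$ across the $k$ Gamma-ratios to sum to a fixed value; this gives exactly the weak composition $\lambda$ of length $k$ appearing in $B_J$.

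The main obstacle is the careful bookkeeping of factorials, signs, and cancellations needed to verify that the limit is finite and to repackage the answer into the factored form $A_J B_J$. The key algebraic lemma is an explicit evaluation of $G_{\lambda_s}(j_s - i_s, j_s)$: using the generating-function definition $\bigl(\tfrac{t}{e^t - 1}\bigr)^l e^{xt} = \sum_m B_m^{(l)}(x) t^m / m!$ and the binomial expansion of $(e^t - 1)^{i_s - 1}$, the coefficient extraction reduces to
\[
G_{\lambda_s}(j_s - i_s, j_s) = \frac{(-1)^{\lambda_s}}{(i_s - 1)!} \sum_{p = 0}^{i_s - 1} (-1)^{i_s - 1 - p} (p + j_s - i_s)^{i_s + \lambda_s - 1}.
\]
Substituting this identity, grouping the Vandermonde-and-factorial prefactor independent of $d$ and $\lambda$ into $A_J$, and grouping the remaining $(d, \lambda, p)$-sum into $B_J$, yields the closed form in the theorem statement. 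The overall sign $(-1)^{k(n-k) - |I|}$ emerges from collecting the signs coming from the asymptotic expansion, the substitution $a_s \mapsto s z \hbar$, and the explicit formula for $G_{\lambda_s}$.
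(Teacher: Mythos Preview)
Your proposal is correct and follows essentially the same approach as the paper: the substitution $a_s\mapsto sz\hbar$, the reduction to tuples $J\geq I$ via vanishing of the $\alpha$-product, the passage to $u=1/z$ with the asymptotic expansion of $\Gamma(u+a)/\Gamma(u+b)$ in terms of generalized Bernoulli polynomials, the elementary-symmetric expansion of the shifted Vandermonde, and the explicit evaluation of $G_{\lambda_s}(j_s-i_s,j_s)$ via the binomial expansion of $(e^t-1)^{i_s-1}$ are all exactly what the paper does. The only organizational difference is that the paper packages this entire computation as Theorem~\ref{lim-calc} (the limit formula for the localization sum), and then the Corollary is deduced in a single sentence by invoking Proposition~\ref{comm-diagram}; you have folded the two steps together, which is harmless.
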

For particular values of $k$, the generating function reduces further, though does still appear as a genuine rational function. Below we list two examples, though in principle this can be done for any $k$.
\begin{example}
When $k=1$ and $I=\{i\}$,
\[
\hbar^{n-1}\int_{X_{1,n}}\stab(p_i)=\binom{n-1}{i-1}
\]
recovering the binomial coefficients.
\end{example}
\begin{example}\label{gr2}
Setting $k=2$ and $I=\{i_{1}, i_{2}\}$, \eqref{closed-form} reduces to
\begin{multline*}
\hbar^{2(n-2)}\int_{X_{2,n}}\stab(p_{I})=
\frac{(n-2)!(n-3)!}{(i_{1}-1)!(i_{2}-1)!(n-i_{1})!(n-i_{2})!}\\\times(-3i_{1}+i_{2}-i_{1}^{2}+4i_{1}i_{2}-i_{2}^{2}+n(2(i_{1}-2i_{2}+1)+(i_{2}-i_{1})^{2})+n^{2}(i_{2}-i_{1})).
\end{multline*}
\end{example}
Even for $k=2$ it is not trivial to see that this quantity, when evaluated at integer values of $i_1,i_2,k,n$, is actually an integer.\\\\
Moreover, we observe that these quantities are positive integers.

\begin{conjecture}\label{positivity}
The integers $\int_{X_{k,n}}\Stab(p_{I})$ are positive for $p_I\in\left(X_{k,n}\right)^{\bT}$.
\end{conjecture}

\section{Combinatorial Interpretations}

We discuss the combinatorics of the ${n\choose k}$ integers arising from the $\C^*$-equivariant integrals of stable envelopes over $X_{k,n}$
\[
Z_{k,n} = \left\{\hbar^{k(n-k)}\int_{X_{k,n}}\Stab(p_I)\right\}_{p_I \in (X_{k,n})^\bT}
\]
as a finite set of integers. We find it constructive to fix $k$ and consider the infinite sequence $\cup_n Z_{k,n}$, along with its labeling of components in terms of $n$.  

As mentioned previously, $\cup_nZ_{1,n}$ is simply the set of binomial coefficients. An individual component $Z_{1,n}$ corresponds to the $n$th row of Pascal's triangle, so there is an obvious recurrence relation between $Z_{1,n}$ and $Z_{1,n+1}$ given by Pascal's rule. These integers for $k>1$ appear to be a new generalization of the binomial coefficients, so their combinatorics is of independent interest. 
\subsection{Variations of Pascal's $n$-simplex}\label{4.1}

The main conjecture of this section is a recurrence relation between $Z_{k,n}$ and $Z_{k,n+1}$. This recurrence is not visible at the level of stable envelopes or their integrals, even for $k=1$, suggesting a hidden symmetry to the geometric setup. 

To a $k$-element subset of $[n]$, $I=\{i_1<i_2<\dots<i_k\}$, consider the corresponding collection of $k$-tuples of $[n]$ given by all possible ways to decrease any number of the indices of $I$ by one, i.e the set $S_{I}$ defined by
\[
S_{I}:=\big\{(i_{1}-\alpha_{1},\dots,i_{k}-\alpha_{k}),\ \alpha_{j}\in\{0,1\}\big\}
\]

Given a fixed point $p_I \in X_{k,n}^\bT$, one is tempted to think of the set $S_I$ as all the ways one can naturally associate a fixed point $p_J\in X_{k,n-1 }^\bT$. The association $p_I\to I \to J \to p_J$ can fail in three ways:
\begin{enumerate}\label{eq:cases}
    \item One of the indices of $J$ is 0, i.e. if $1\in I$.
    \item $J$ has a repeated index, i.e. if sequence $i,i+1 \in I$ for some $i$. \hfill$(\star)$
    \item $n\in I$, and the element $n$ remains in $J$. 
\end{enumerate}
These exceptions are crucial for the statement of the recurrence relation. In particular, though all 3 cases do not yield genuine fixed points of $X_{k,n-1}$, one can still evaluate the expression (\ref{closed-form}) at the resulting $k$-tuple. It is clear that (\ref{closed-form}) vanishes for tuples arising from case 3), and we expect that it also vanishes for case 1). However tuples from case 2) can yield nonzero integers.
Our main conjecture asserts a recurrence relation between $Z_{k,n}$ and $Z_{k,n-1}$, generalizing Pascal's rule. We refer to this recurrence relation as $2^k$-neighbor addition, for reasons which will become apparent. 

\begin{conjecture}\label{2^k-nbr}
Let $p_I\in X_{k,n}^{\bT}$ with $I=\{i_{1}<\dots<i_{k}\}$. Then 
\[
\int_{X_{k,n}}\Stab(p_{I})=\sum_{J\in S_{I}}\int_{X_{k,n-1}}\Stab(p_{J})
\]
For $J$ falling in cases 1), 2), or 3) in $(\star)$, $\int_{X_{k,n-1}}\Stab(p_J)$ is undefined. For those cases, we declare that the summand is given by evaluating the expression (\ref{closed-form}) at the $k$-tuple $J$. 
\end{conjecture}

\begin{remark}
There is a ``naive version'' of Conjecture \ref{2^k-nbr} which only takes into account the geometric setting: one would say to leave any $J$ which do not correspond to a fixed point as contributing 0 to the sum, rather than evaluating $J$ at (\ref{closed-form}). For a given $k,n$, this naive version does hold for most fixed points, but is not true in general. In this way, we view the inclusion of the exceptional cases in 2) as ``corrections'' to the naive conjecture. 
\end{remark}

\begin{definition}
    A sequence $(a_{n})_{n\in\mathbb{Z}_{\geq 0}}$ of positive numbers is said to be log-concave if
    \[
    a_{i}^{2}\geq a_{i-1}a_{i+1}\ \quad \forall i\geq 1
    \]
\end{definition}
It is well-known that the sequence of binomial coefficients $\left(\binom{n}{i} \right )_{0\leq i\leq n}$ is log-concave. We also observe log-concavity of naturally defined subsequences within $Z_{k,n}$. 

\begin{conjecture}
    Let $\underline{d}:=(d_{1},\dots,d_{k-1})$ be a $k-1$ tuple of positive integers such that \newline$\sum_{i}d_{i}\leq n-1$, and let $\underline{d}_{i}$ denote the $k-$subset $\{i,i+d_{1},\dots, i+d_{k-1}\}$ Then the sequence 
    \[
    \left(\int_{X_{k,n}}\Stab(p_{\underline{d}_{i}})\right)_{1\leq i\leq n-d_{k-1}}
    \]
    is log-concave.
\end{conjecture}
\begin{example}\label{log-concavity-example}
    Let $n=10,\ k=3$, and consider $\underline{d}=(2,1)$. The sequence of fixed points is given by
    \[
    (\underline{d}_{i})_{1\leq i\leq n-\sum_{i}d_{i}}=\Big(\{1,3,4\},\ \{2,4,5\},\ \{3,5,6\},\ \{4,6,7\},\ \{5,7,8\},\ \{6,8,9\},\ \{7,9,10\}\Big)
    \] and the corresponding sequence of integrals is given by
    \[\left(\int_{X_{k,n-1}}\Stab(p_{\underline{d}_{i}})\right)_{1\leq i\leq n-\sum_{i}d_{i}}=\Big(30,\ 669,\ 3110,\ 4270,\ 1986,\ 295,\ 9\Big)\]
    Which is log-concave. See Remark \ref{remk-slice} for more on the sequence $(\underline{d}_{i})_{1\leq i\leq n-\sum_{i}d_{i}}$
\end{example}

For the rest of this section we set $k=2$, and explain the $2^k$-neighbor addition recurrence relation in more detail. In this setting it is easier to see why the recurrence relation has the name of ``4-neighbor addition'', and how it generalizes Pascal's rule, and we show how this recurrence relation can be used to calculate a generating function for $Z_{2,n}$.

\begin{example}[$X_{2,4}$]\label{24eg}

For a fixed $n$, we arrange the fixed points of $X_{2,k}$ in a triangle: We start by placing the highest fixed point in the lower left corner of the triangle, in this case $\langle 34\rangle$, where $\langle ij\rangle$ denotes the fixed point $\text{span}(e_i,e_j)$. Then we declare that traveling in the ``upper right'' direction sends $\langle ij\rangle\to\langle i-1,j\rangle$, and traveling in the ``right'' direction sends $\langle ij\rangle\to\langle i-1,j-1\rangle$. This results in the triangle
\[
\begin{tabular}{ccccccc}
&& $\langle 14\rangle$&& \\
&& &&\\
&$\langle 24\rangle $& &$\langle 13 \rangle$\\
&&&\\ 
$\langle 34\rangle$&&$\langle 23\rangle$&&$\langle 12\rangle$
\end{tabular}.
\]
The resulting triangles formed for particular choices of $n$ are referred to as layers. We then construct another triangle of the same size whose entry in location $\langle ij\rangle$ is the corresponding integer in $Z_{2,n}$:
$$
\begin{tabular}{ccccc}
&&2&&\\
&3&&3&\\
1&&2&&1
\end{tabular},\quad \quad\quad\begin{tabular}{ccccccc}
&& $\langle 14\rangle$&& \\
&& &&\\
&$\langle 24\rangle $& &$\langle 13 \rangle$\\
&&&\\ 
$\langle 34\rangle$&&$\langle 23\rangle$&&$\langle 12\rangle$
\end{tabular}
$$
For instance, 
\[
\int_{X_{2,4}}\hbar^4\stab(p_{\{2,3\}})=2
\]

\end{example}

This construction yields, for every $n\geq 2$, a combinatorial way of arranging the integers $Z_{2,n}$ in the plane.

The number of fixed points is such that if we stack these triangles on top of each other in 3-space, they will form a 3-simplex or pyramid. We call the resulting infinite 3-simplex the \underline{$Gr_2$-simplex}. It is naturally indexed by $n\geq 2$, whose horizontal cross-sections (layers) consist of the triangle formed by computing limits of integrals of stable envelopes for a fixed $n$. A list of the $Z_{2,n}$ integers can be found at OEIS A390353.

Note that the difference $j-i$ is constant along rows of each layer, and $j$ is constant along the north-east and south-west diagonals of each layer. The triples $(layer, row,\ $NE-SW$ \ diagonal)$ determines a coordinate system for the $Gr_2$-simplex. The 4-neighbor addition can be understood graphically in the following way.

\begin{example}\label{good4neighbor}
We place the integers $Z_{2,4}$ in a triangle (in blue) ``above" the integers $Z_{2,5}$, also in a triangle (in black). We further include integers arising from evaluating the $2$-tuples from $(\star)$ case 2), according to the same coordinate system (in the case $k=2$, this just means they are placed in the bottom-most row). In this case, there are 4 exceptions, $\langle 11\rangle, \dots, \langle 44\rangle$. 

Pictured below is a top-down view, with the blue layer closer to the reader. With these conventions, 4 neighbor addition manifests as follows: A black integer is determined by the 4 blue/red integers surrounding it at 12 o'clock, 3 o'clock, 6 o'clock, and 9 o'clock. 
$$
\begin{tabular}{cccccccccc}
&&&2&&\\
&&&\textcolor{blue}{2}\\
&&5&&5&\\
&&\textcolor{blue}{3}&&\textcolor{blue}{3}\\
&4&&10&&4\\
&\textcolor{blue}{1}&&\textcolor{blue}{2}&&\textcolor{blue}{1}\\
1&&4&&4&&1\\
\textcolor{red}{0}& & \textcolor{red}{-2} & & \textcolor{red}{-2} & & \textcolor{red}{0}
\end{tabular}
$$
It is not clear what geometric significance these nonpositive correction entries have.
\end{example}

Given any layer of the $Gr_2$-simplex, the integers in this layer can be naturally assigned as the coefficients of a complete homogeneous polynomial in 3 variables, say $a,b,c$, and the resulting polynomial will be divisible by $a+2b+c$. This is illustrated for the first few layers in Figure \ref{fig: unreduced gr2 example}.

\begin{figure}[h!]
    \centering
    \includegraphics[width=0.75\linewidth]{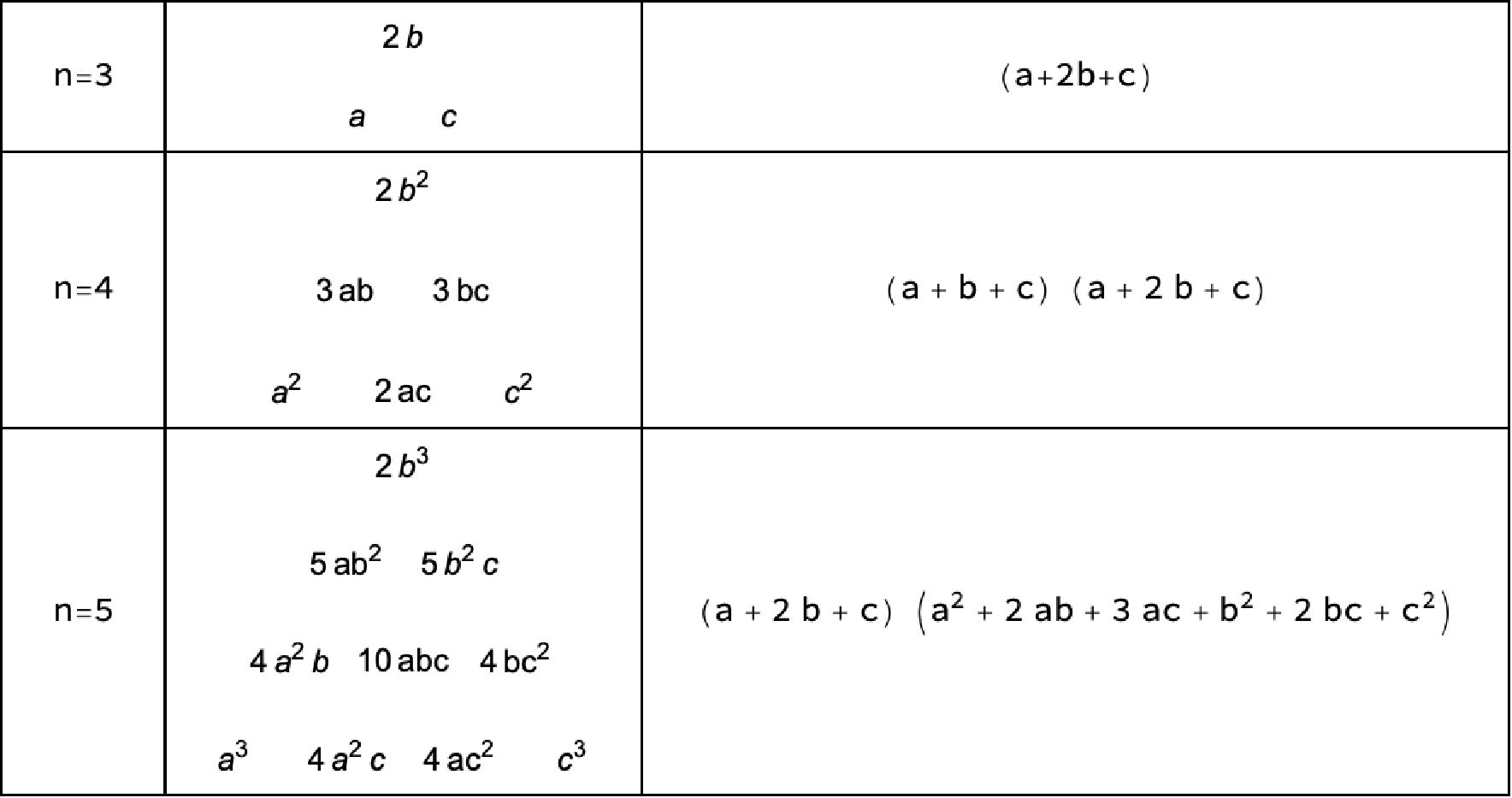}
    \captionsetup{width=.8\linewidth}
    \caption{Integers from $Z_{2,n}$ for $n=3,4,5$ arranged into a triangle, then associated to the coefficients of a polynomial in $a,b,c$. The resulting polynomial is divisible by $(a+2b+c)$. }
    \label{fig: unreduced gr2 example}
\end{figure}

The monomials are arranged as follows: The integer at the top of the triangle corresponds to $b^{n-2}$, and going southeast in the triangle corresponds to multiplying the current monomial by $\frac{c}{b}$, and going southwest in the triangle corresponds to multiplying the current monomial by $\frac{a}{b}$.

\begin{conjecture}
\label{4-nbr-Gr2}
    The entries in each layer of the $Gr_{2}$-simplex correspond (via the arrangement described above) to coefficients of a polynomial in $a,\ b,\ c$ that is divisible by $(a+2b+c)$. 
\end{conjecture}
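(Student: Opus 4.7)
The plan is to proceed by induction on $n$, deriving a polynomial recurrence for $P_n$ from Corollary \ref{full4neighboraddition} and then reducing the inductive step to an auxiliary identity verifiable from the closed form of Example \ref{gr2}. First I would reformulate: substituting $X = a/b$ and $Y = c/b$ gives $P_n(a,b,c) = b^{n-2}\tilde P_n(X,Y)$ with $\tilde P_n(X,Y) = \sum_{1 \le i<j\le n}\langle i,j\rangle_n X^{i-1} Y^{n-j}$, so the conjecture is equivalent to the polynomial identity $\tilde P_n(X,-2-X) \equiv 0$ in $X$. It is also useful to record the symmetry $\langle i,j\rangle_n = \langle n+1-j,\, n+1-i\rangle_n$, immediate from the closed form, which implies $P_n(a,b,c) = P_n(c,b,a)$, so that the hypothetical quotient is $a \leftrightarrow c$-symmetric.

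Applying Corollary \ref{full4neighboraddition} term-by-term in the definition of $P_n$ and reindexing the four resulting sums, one obtains the recurrence
\[
b\, P_n \;=\; (a+b)(b+c)\, P_{n-1} \;+\; b\, D_{n-1} \;-\; ac\, C_{n-1},
\]
where
\[
D_m(a,c):=\sum_{i=1}^{m}\langle i,i\rangle_m\, a^{i-1} c^{m-i}, \qquad C_m(a,c):=\sum_{j=2}^{m}\langle j-1,j\rangle_m\, a^{j-2} c^{m-j}
\]
are the ``diagonal'' and ``consecutive'' polynomials extracted from level $m$. The correction $b\, D_{n-1}$ arises because the recurrence for a consecutive pair $(i,i+1)$ at level $n$ picks up the extended diagonal entry $\langle i,i\rangle_{n-1}$, while the term $-ac\, C_{n-1}$ compensates for the $b$-free slice of $P_{n-1}$ contributed by the shifted summand $\langle i-1, j\rangle_{n-1}$. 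The base case $n=3$ is trivial since $P_3 = a+2b+c$.

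Assuming inductively that $(a+2b+c) \mid P_{n-1}$, specialization at $a = -2b-c$ kills the first term on the right, so divisibility of $P_n$ by $(a+2b+c)$ reduces to the identity
\[
b\, D_{n-1}(-2b-c,\, c) \;+\; (2b+c)\, c\, C_{n-1}(-2b-c,\, c) \;=\; 0
\]
as polynomials in $b,c$. A direct check confirms this for small $n$: e.g., for $n=4$, $b\, D_3(-2b-c, c) = 4b^2c+2bc^2$ and $(2b+c)c\, C_3(-2b-c, c) = -4b^2c-2bc^2$, cancelling exactly. For general $n$, one uses the explicit closed forms $\langle i,i\rangle_m = -2(m-2)!(m-3)!(i-1)(m-i)/[((i-1)!(m-i)!)^2]$ together with the Narayana-type expression for $\langle j-1,j\rangle_m$ derived by specializing Example \ref{gr2} at consecutive indices; expanding $(-2b-c)^{i-1}$ by the binomial theorem and extracting the coefficient of each monomial $b^p c^q$ with $p+q = n-2$ decomposes the auxiliary identity into a finite family of binomial-coefficient identities, each provable by Chu--Vandermonde summation (or, as a fallback, by Zeilberger's algorithm applied to the resulting hypergeometric sums).

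The main obstacle is the combinatorial verification in this last step: no single hypergeometric mechanism obviously governs the cancellation between the diagonal contribution $b\, D_{n-1}$ and the Narayana-type contribution $(2b+c) c\, C_{n-1}$, so producing a transparent proof rather than a black-box Zeilberger certificate will likely require a clever rearrangement. A conceptually cleaner alternative, sidestepping the auxiliary polynomials $D,C$ entirely, would be to prove $\tilde P_n(X,-2-X) \equiv 0$ directly from Example \ref{gr2}: the cubic $R(i,j,n)$ there is naturally a quadratic in $n$, so one may hope to exhibit term-by-term cancellation at each power of $n$ separately, reducing the conjecture to three simpler Vandermonde-like identities; whether that route is more illuminating than the inductive one remains to be seen.
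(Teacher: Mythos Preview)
The paper does not prove this statement: it is stated as a conjecture, and the only trace of an argument is a commented-out stub ``Using \ref{full4neighboraddition}, \ldots''. So there is no paper proof to compare against; your proposal goes strictly further than the paper does.

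Your approach is exactly the one the stub points to, and the structural part is correct. With the monomial assignment $\langle i,j\rangle_n \leftrightarrow a^{i-1}b^{j-i-1}c^{n-j}$, applying Corollary~\ref{full4neighboraddition} term by term and reindexing the four sums gives precisely
\[
b\,P_n \;=\; (a+b)(b+c)\,P_{n-1} \;+\; b\,D_{n-1}(a,c)\;-\;ac\,C_{n-1}(a,c),
\]
with your $D_m,C_m$; I checked this against the $n=4$ layer and it reproduces $P_4$ on the nose. The base case $P_3=a+2b+c$ is immediate, and specializing $a=-2b-c$ does reduce the inductive step to your auxiliary identity
\[
b\,D_{n-1}(-2b-c,\,c)\;+\;(2b+c)\,c\,C_{n-1}(-2b-c,\,c)\;=\;0.
\]

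The gap is that you do not actually prove this identity. You observe correctly that the diagonal entries have the closed form $\langle i,i\rangle_m = -2(m-2)!(m-3)!(i-1)(m-i)/\bigl((i-1)!(m-i)!\bigr)^2$ and that the consecutive entries come from the same formula of Example~\ref{gr2}; after expanding $(-2b-c)^{i-1}$ and extracting the coefficient of each $b^pc^{m-p}$ one is left with single hypergeometric sums in $i$. So a Zeilberger certificate would indeed close the argument, but ``provable by Chu--Vandermonde or, as a fallback, by Zeilberger'' is not yet a proof---especially since, as you yourself note, no single classical summation visibly governs the cancellation between the $D$ and $C$ contributions. If you want this to stand as a proof rather than a refined conjecture, you should either exhibit the certificate explicitly or carry out the direct route you sketch at the end (showing $\tilde P_n(X,-2-X)\equiv 0$ from the closed form, splitting by the power of $n$ in the cubic factor). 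Either way, what you have written already advances the paper's conjecture to a single concrete hypergeometric identity.
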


If we divide the polynomial of a given layer by the common factor $(a+2b+c)$, we obtain a polynomial of one degree less, whose coefficients can be naturally arranged into a triangle one size smaller, in the same manner as described above. Then we can form a new 3-simplex by assembling these ``reduced'' triangles, which will now start at a single point at the top. We call this simplex the \underline{reduced $Gr_2$-simplex}. The reduced $Gr_2$-simplex does not have natural coordinates given by fixed point labeling, but does inherit the same coordinates given by layer, row, $NE-SW$-diagonal. We show a few layers of this process in Figure \ref{fig: reduced gr2}. To distinguish the coordinates on the reduced $Gr_2$-simplex from the coordinates on the (unreduced) $Gr_2$-simplex, we denote layers of the former by $\ell$, and the latter by $n$. 

\begin{figure}[h!]
    \centering
    \includegraphics[width=0.75\linewidth]{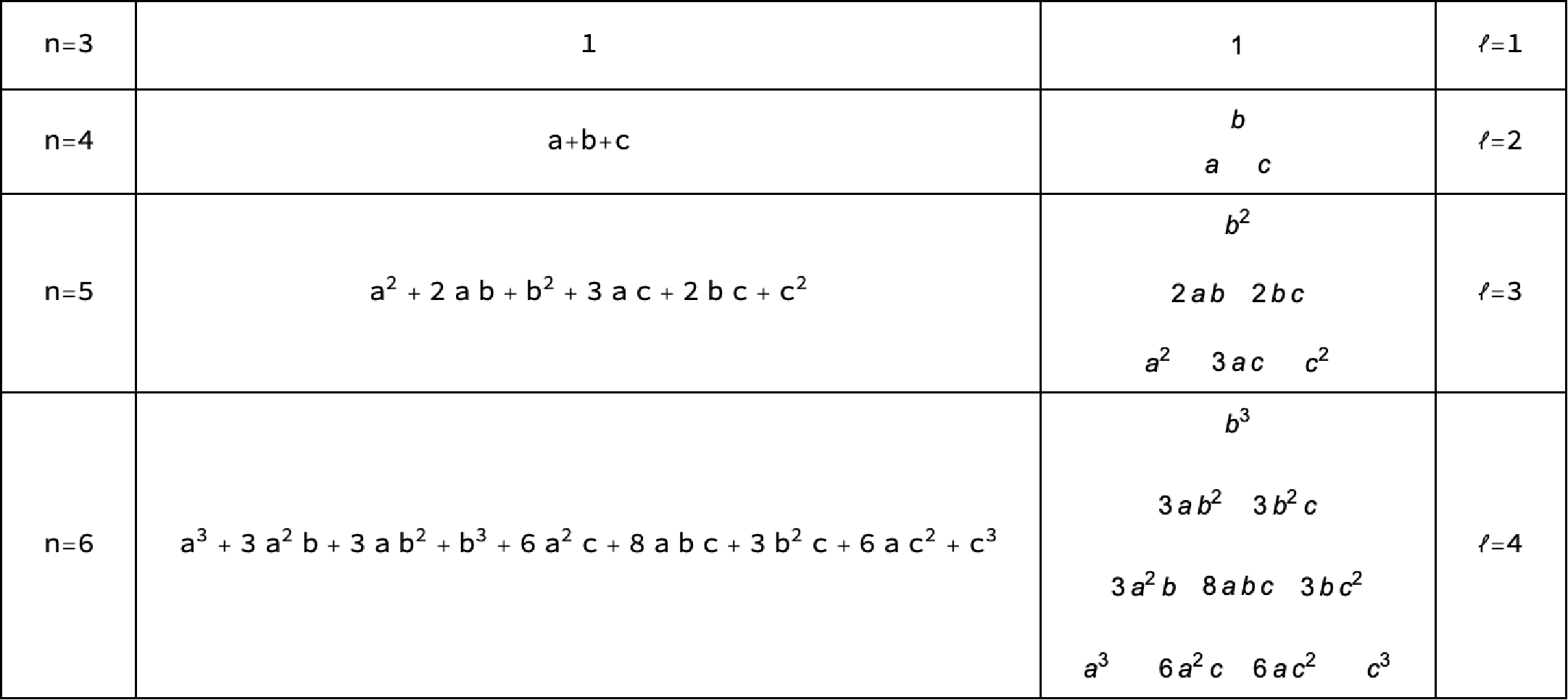}
    \captionsetup{width=.8\linewidth}
    \caption{From the $Gr_2$-simplex, we divide out the common factor of $(a+2b+c)$. The result can be arranged into another 3-simplex, which we call the reduced $Gr_2$-simplex.}
    \label{fig: reduced gr2}
\end{figure}

The reduced $Gr_2$-simplex also obeys a 4-neighbor addition recurrence relation, but does not require the additional correction terms, as in the (un-reduced) $Gr_2$-simplex, see Example (\ref{good4neighbor}).

\begin{conjecture}\label{reduced-Gr2-4-nbr}
\[\xi_{\ell,r,c}=\xi_{\ell-1,r,c}+\xi_{\ell-1,r-1,c}+\xi_{\ell-1,r-1,c-1}+\xi_{\ell-1,r-2,c-1}\]
Where $\xi_{\ell,r,c}$ denotes the integer in the reduced $Gr_{2}$-simplex in layer $\ell$, row $r$, and NE-SW diagonal $c$, and $\xi_{\ell,r,c}:=0$ if there is no entry in this position in the reduced $Gr_{2}$-simplex.
\end{conjecture}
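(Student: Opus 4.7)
The plan is to prove Conjecture \ref{reduced-Gr2-4-nbr} simultaneously with the divisibility Conjecture \ref{4-nbr-Gr2} via a uniqueness argument. First I will define integers $\hat\xi_{\ell,r,c}$ recursively by $\hat\xi_{1,0,0}=1$ (with all other layer-1 entries zero) and by the conjectured 4-neighbor recurrence for $\ell\ge 2$. From these I form the candidate polynomials
\[
\hat Q_\ell := \sum_{0 \le c \le r \le \ell-1} \hat\xi_{\ell,r,c}\,\alpha^{r-c}\beta^{\ell-1-r}\gamma^c, \qquad \hat P_n := (\alpha+2\beta+\gamma)\,\hat Q_{n-2}.
\]
The key claim is $\hat P_n = P_n$ for all $n\ge 3$, where $P_n$ is the true layer polynomial of the $Gr_2$-simplex with coefficients given by Example \ref{gr2}; this equality implies both conjectures at once.

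The proof is by induction on $n$. The base case $n=3$ is immediate: $\hat P_3 = \alpha+2\beta+\gamma = P_3$. For the inductive step, I invoke the characterization of $P_n$ supplied by Theorem \ref{gr2simplex}: on the interior (non-bottom) rows it satisfies the original 4-neighbor recurrence, and on the bottom row it is given by the explicit Narayana sum $N(n-2,i)+N(n-2,i-1)$. For a non-bottom entry at position $(r,c)$ with $r<n-2$, the coefficient-level expansion of $(\alpha+2\beta+\gamma)\hat Q_{n-2}$ gives
\[
\hat a^{(n)}_{r,c} = \hat\xi^{(n-2)}_{r-1,c} + 2\hat\xi^{(n-2)}_{r,c} + \hat\xi^{(n-2)}_{r-1,c-1},
\]
and substituting the reduced recurrence into each summand and collecting terms reproduces, after a routine bookkeeping, exactly the original 4-neighbor identity $\hat a^{(n)}_{r,c} = \hat a^{(n-1)}_{r-1,c-1} + \hat a^{(n-1)}_{r-2,c-1} + \hat a^{(n-1)}_{r,c} + \hat a^{(n-1)}_{r-1,c}$. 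Because $r<n-2$, all four referenced entries of layer $n-1$ fall in non-extended positions, so by the inductive hypothesis they coincide with the true $a^{(n-1)}$'s, giving $\hat a^{(n)}_{r,c}=a^{(n)}_{r,c}$.

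The hard part is the bottom row $r=n-2$. There the middle term of the expansion for $\hat a^{(n)}_{r,c}$ vanishes by range, leaving $\hat a^{(n)}_{n-2,c} = \hat\xi^{(n-2)}_{n-3,c}+\hat\xi^{(n-2)}_{n-3,c-1}$. To match Theorem \ref{gr2simplex}'s Narayana formula, one needs the auxiliary identity
\[
\hat\xi^{(\ell)}_{\ell-1,c} \;=\; N(\ell,\,c+1),
\]
after which the Narayana symmetry $N(\ell,k)=N(\ell,\ell+1-k)$ immediately yields $\hat a^{(n)}_{n-2,c} = N(n-2,c)+N(n-2,c+1) = a^{(n)}_{n-2,c}$. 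Establishing this auxiliary identity is the main obstacle: the reduced 4-neighbor recurrence couples the bottom row of $\hat Q_\ell$ to the penultimate row (experimentally given by $N(\ell{+}1,k)-N(\ell,k)-N(\ell,k{-}1)$), which in turn couples further inward, so a naive induction on $\ell$ does not close. The most promising routes are either to exhibit a lattice-path or Young tableau model that computes $\hat\xi_{\ell,r,c}$ and manifests the Narayana structure on the boundary, or to bypass Theorem \ref{gr2simplex} entirely by verifying $(\alpha+2\beta+\gamma)\mid P_n$ directly --- e.g.\ by showing $P_n(-2\beta-\gamma,\beta,\gamma)\equiv 0$ from the closed form of Example \ref{gr2} --- and then transporting the recurrence for $P_n$ to one for $Q_n$ via polynomial division.
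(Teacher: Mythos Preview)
First, note that the paper does not prove this statement: it is explicitly labeled a \emph{Conjecture}, and no proof follows (the adjacent divisibility statement, Conjecture~\ref{4-nbr-Gr2}, is likewise left open, with only a commented-out proof stub). So there is no ``paper's own proof'' to compare against; what follows is an assessment of your proposal on its merits.

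Your overall architecture --- define $\hat\xi$ by the desired recurrence, form $\hat P_n=(\alpha+2\beta+\gamma)\hat Q_{n-2}$, and prove $\hat P_n=P_n$ by induction using Theorem~\ref{gr2simplex} --- is sound, and the interior-row step is correctly handled. But the proposal is not a proof: the bottom-row case hinges on the auxiliary identity $\hat\xi^{(\ell)}_{\ell-1,c}=N(\ell,c+1)$, which you do not establish and which you yourself identify as the obstruction. This is a genuine gap rather than a routine omission, because the reduced recurrence feeds the bottom row of layer $\ell$ from \emph{both} the bottom and penultimate rows of layer $\ell-1$, so an induction on $\ell$ restricted to the bottom row cannot close.

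Of your two suggested escape routes, the second --- checking $P_n(-2\beta-\gamma,\beta,\gamma)=0$ directly from the closed form of Example~\ref{gr2} --- is concrete and would settle Conjecture~\ref{4-nbr-Gr2}. But be aware that divisibility alone does not give the reduced recurrence: once $P_n=(\alpha+2\beta+\gamma)Q_n$, you still need $Q_n$ to satisfy 4-neighbor addition, and the bottom-row exception in Theorem~\ref{gr2simplex} blocks a naive ``transport by division''. Passing instead through the exception-free Corollary~\ref{full4neighboraddition} looks tempting, but the extended $\langle i,i\rangle$-row it introduces is \emph{input} data (supplied by the closed formula) rather than output of the recurrence, so after dividing out $(\alpha+2\beta+\gamma)$ you are again left with a boundary identity to verify --- essentially the same Narayana-type statement. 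In short, the gap you flagged is the heart of the matter, and neither route you sketch closes it without additional work.
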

For example, below in blue is layer $\ell=3$ of the reduced $Gr_2$-simplex which should be thought of as sitting above layer $\ell=4$ of the reduced $Gr_2$-simplex in black. Layer $\ell$ of the reduced $Gr_2$-simplex corresponds to layer $\ell+1$ of the $Gr_2$-simplex.

$$
\begin{tabular}{cccccccccc}
&&&1&&\\
&&&\textcolor{blue}{1}\\
&&3&&3&\\
&&\textcolor{blue}{2}&&\textcolor{blue}{2}\\
&3&&8&&3\\
&\textcolor{blue}{1}&&\textcolor{blue}{3}&&\textcolor{blue}{1}\\
1&&6&&6&&1
\end{tabular}
$$

The reduced $Gr_2$-simplex and the $Gr_2$-simplex contain equivalent information, and we can translate between them by multiplying or dividing by $(a+2b+c)$. 

The following is a computation of the generating function for the reduced $Gr_2$-simplex.

Let $\xi_{\ell,r,c}$ be defined as in Conjecture \ref{reduced-Gr2-4-nbr}. Since the reduced $Gr_{2}$-simplex satisfies 4-neighbor addition without corrections, each entry satisfies the recursion in \ref{reduced-Gr2-4-nbr}.
Multiplying through by $x^\ell y^r z^c$ on both sides of this recursion, and summing over the valid ranges for each index, and defining $B(x,y,z) := \sum\limits_{1 \leq c \leq r \leq \ell} \xi_{\ell,r,c} x^\ell y^r z^c$ to be the ordinary generating function for the reduced $Gr_{2}$-simplex, we may derive the expression for $B(x,y,z)$ following classical generating function techniques as in \cite{W}:
\begin{align*}
B(x,y,z)-xyz
&= B(x,y,z)(x+xy+xyz)+xy^2z \left( B(x,y,z)-\sum_{\substack{\ell \geq 1\\ 1 \leq c \leq \ell}} \xi_{\ell,\ell,c}(xy)^{\ell}z^{c}\right)\\
\implies B(x,y,z) &= \frac{xyz - \frac{yz}{2}\left(1-xy(1+z)-\sqrt{(1-xy(1+z))^2-4(xy)^2z}\right)}{1-x(1+y)(1+yz)},
\end{align*}
where we note that the sum above is the generating function for the Narayana numbers \cite{OEISNarayana}.

We can work interchangeably with the variables $x,\ y,\ z$ and the variables $a,\ b,\ c$ by the maps:
\begin{align*}
x^\ell y^r z^k &\rightarrow a^{r-k}b^{\ell-r}c^{k-1}\\
x^{i+j+k+1}y^{i+k+1}z^{k+1} &\leftarrow a^ib^jc^k
\end{align*}
This means we can convert the reduced $Gr_{2}$-simplex to the $Gr_{2}$-simplex by multiplying by $(xy+2x+xyz)$ (This is the monomial that corresponds to $a+2b+c$ after the change of variables to $x,\ y,\ z$), and then adding $xyz$, which corresponds to $X_{2,2}$. Then the generating function for the $Gr_{2}$-simplex is:
\begin{align*}
F(x,y,z) &= xyz + (xy+2x+xyz)B(x,y,z)\\
&= x y z \left(1-\frac{y (2 x z+1) \left(x (y z+y+2)+\sqrt{x y \left(x y (z-1)^2-2 (z+1)\right)+1}-1\right)}{2 x (y+1) (y z+1)-2}\right)
\end{align*}

To compute the entry corresponding to the fixed point $\langle i_1,i_2 \rangle$ for a specific $n \geq 2$, one need only find the coefficient of $x^{n-1}y^{n-(i_2-i_1)}z^{i_1}$ in $F(x,y,z)$. Taking this coefficient must equivalent to evaluating the expression in Theorem \ref{lim-calc} with $k = 2$, and $I = \{i_1,i_2\}$, though it is not obvious.

For higher values of $k$, we expect there should be an analogous Pascal's $n$-simplex obeying $2^k$-neighbor addition. 
\begin{remark}\label{remk-slice}
The sequence of fixed points $(\underline{d}_{i})_{1\leq i\leq n-d_{k-1}}$ in Example \ref{log-concavity-example} comes from a ``1-dimensional slice" of the $n$-simplex obtained from our fixed points.
\end{remark}
\subsection{Paths to Young diagrams}\label{var-path}

This section outlines a new conjectural procedure to obtain integrals of stable envelopes $\stab_(p_{I})$ due to A. Varchenko (for the standard chamber). 

We denote the multiset consisting of $k_{i}$ many copies of $\alpha_{i}$ by $\sum_{i}k_{i}\alpha_{i}$. Let $\lambda$ be a partition whose Young diagram fits in an $(n-k)\times k$ box. We can associate to this partition a subset of $\{1,\dots,n\}$ of size $k$ as follows: Consider the conjugate partition $\lambda^{t}$ of $\lambda$ and append 0 if necessary so that $\lambda^{t}$ is a $k-$tuple. The subset associated to $\lambda$ is the set $\Omega(\lambda):=\lambda^{t}+(k,k-1,\dots,1)$. We describe a procedure that defines a rational function in the equivariant parameters $a_{i}$ for a given partition $\lambda$:\newline\newline
Consider a partition $\lambda$ in an $(n-k)\times k$ rectangle, and let $\lambda^{c}$ denote its complement in this rectangle. Call a sequence $\mathcal{S}$ of 2-tuples a \underline{path} to $\lambda$ if it contains $(n-k)\times k$ entries so that each entry is the coordinates of a box in the $(n-k)\times k$ rectangle (the first entry is the row number, and the second is the column), and the subsequence $\mathcal{S}_{\lambda}$ of coordinates consisting of boxes in $\lambda$ is a valid way of removing boxes from $\lambda$ to get to the empty partition, similarly the subsequence $\mathcal{S}_{\lambda^{c}}$ is a valid way of removing boxes from $\lambda^{c}$ to get to the empty partition, where $\lambda^{c}\cup\lambda$ is the $(n-k)\times k$ rectangle. Let $\mathcal{P}_{\lambda}$ denote the set of paths to $\lambda$. Given a path $\mathcal{S}$ to $\lambda$, define a sequence $\mathcal{T}(\mathcal{S})$ of multisets as follows: The $j$-th multiset $\mathcal{T}(\mathcal{S})_{j}$ in this sequence is obtained by $\sum\limits_{i=1}^{j}a_{e_{i}(1)-e_{i}(2)+k}$, where $\mathcal{S}=((e_{i}(1),e_{i}(2)))_{i=1}^{k(n-k)}$. Given a multiset $A=\sum_{i}k_{i}\alpha_{i}$, define
\[w(A):=\sum_{i}\hbar k_{i}(k_{i}-k_{i+1})+k_{i}(a_{i+1}-a_{i})\]
Consider the sum 
\[\mathbb{V}(\lambda):=\sum\limits_{\mathcal{S}\in\mathcal{P}_{\lambda}}\prod\limits_{j=1}^{k(n-k)}\frac{1}{w(\mathcal{T}(\mathcal{S})_{j})}\]

\begin{conjecture}\label{var-int}\textbf{(A. Varchenko)}
Given a partition $\lambda$ in an $(n-k)\times k$ rectangle,\[\mathbb{V}(\lambda)=\sum_{p_J\in X_{k,n}^\bT}\frac{\Stab(p_{\Omega(\lambda)})|_{p_{J}}}{e(T_{p_J}X_{k,n})}\]
   This implies that 
   \[\lim_{\mathbf{a}\to 0}\mathbb{V}(\lambda)=\int_{X_{k,n}}\Stab(p_{\Omega(\lambda)})\]
\end{conjecture}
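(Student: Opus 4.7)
The plan is to establish Conjecture \ref{var-int} by recognizing both sides as two different expansions of the same iterated residue. On the right-hand side, by Theorem \ref{weight-fcns-stab} we can replace $\Stab(p_{\Omega(\lambda)})$ with the weight function $W(p_{\Omega(\lambda)})$, so that the sum becomes the standard $\bT$-equivariant localization pairing obtained by taking simultaneous residues of a rational function in auxiliary variables $t_1,\dots,t_k$ at the poles $t_i=a_{j_i}$, summed over subsets $J$. On the left-hand side, the path sum $\mathbb{V}(\lambda)$ should correspond to the same iterated residue taken one box at a time: each path $\mathcal{S}\in\mathcal{P}_\lambda$ records an order in which the $k(n-k)$ residues are extracted, and the factors $1/w(\mathcal{T}(\mathcal{S})_j)$ are the partial denominators produced step by step.

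First I would package the integrand of $W(p_{\Omega(\lambda)})$ together with $1/e(T_{p_J}X)$ into a master rational function $\Phi_\lambda(t_1,\dots,t_k)$ whose total iterated residue at the relevant poles is exactly the localization sum on the RHS. Second, I would expand this iterated residue in all possible orders of picking up poles: each order naturally corresponds to a sequence of $k(n-k)$ box placements in the $(n-k)\times k$ rectangle, and the essential combinatorial claim is that when poles are grouped by their anti-diagonal label $r-c+k$, the partial denominator produced at step $j$ equals $w(\mathcal{T}(\mathcal{S})_j)$. Third, I would verify the support restriction: paths whose restriction $\mathcal{S}_\lambda$ (respectively $\mathcal{S}_{\lambda^c}$) is not a valid box-removal of $\lambda$ (respectively $\lambda^c$) produce a vanishing residue, because the numerator factors $\prod_\alpha(a_\alpha-t_{\sigma(r)})$ and $\prod_\beta(t_{\sigma(r)}-a_\beta+\hbar)$ in $W$ force a zero; this matches the restriction of the sum on the LHS to $\mathcal{P}_\lambda$. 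Once the rational function identity is established, the limiting statement follows immediately from Proposition \ref{comm-diagram}.

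The main obstacle will be the iterated residue calculation in the second step, and in particular matching the form of the weight $w(A)=\sum_i \hbar k_i(k_i-k_{i+1})+k_i(a_{i+1}-a_i)$. The coefficient $k_i(k_i-k_{i+1})$ strongly suggests that when successive residues land on neighboring anti-diagonals, the interaction poles $t_r-t_s=\hbar$ in the denominator of $W$ must contribute, and controlling how these $\hbar$-shifted pole interactions accumulate across $k(n-k)$ iterated residues is delicate. As a fallback, one could proceed by induction on $|\lambda|$: remove a corner box $b$ of $\lambda$, find a Pieri-type decomposition expressing $W(p_{\Omega(\lambda)})$ in terms of $W(p_{\Omega(\lambda\setminus b)})$, and match the resulting box-addition recursion for the RHS against the evident recursion on $\mathbb{V}(\lambda)$ obtained by splitting each path $\mathcal{S}$ according to the position at which $b$ is placed. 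The base case $\lambda=\emptyset$, where $\Omega(\emptyset)=\{1,2,\dots,k\}$ is the minimal fixed point, is accessible by direct computation since the RHS collapses to $1/e(T_{p_{\{1,\dots,k\}}}X)$ and the LHS is a single path.
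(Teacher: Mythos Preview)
The paper does not prove this statement: it is explicitly labeled a conjecture and is left open, supported only by the worked example for $\lambda=(1)$ in $T^*Gr(2,4)$. So there is no paper proof to compare against; the question is whether your outline could actually be carried out.

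There are two concrete problems. First, your base case is incorrect on both sides. For $\lambda=\emptyset$ one has $\Omega(\emptyset)=\{1,\dots,k\}$, but in the paper's conventions this is the \emph{maximal} fixed point, not the minimal one: the stable envelope matrix in the $T^*\mathbb{P}^3$ example has row $1$ fully nonzero and row $n$ with a single nonzero entry. Hence the RHS localization sum for $\Omega(\emptyset)$ has $\binom{n}{k}$ terms, not one. On the LHS, when $\lambda=\emptyset$ a path is any standard Young tableau of the full $(n-k)\times k$ rectangle (since $\mathcal{S}=\mathcal{S}_{\lambda^c}$), and there are many of these, not one. The ``easy'' end is $\lambda$ equal to the full rectangle, where $\Omega(\lambda)=\{n-k+1,\dots,n\}$ is minimal and the RHS is a single term; but even there the LHS has several paths, and one already needs a nontrivial telescoping identity (visible in the $2\times 2$ case) to match.

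Second, the iterated-residue mechanism is underspecified in a way that hides the real difficulty. The weight function $W(p_I)$ is a rational function of $k$ variables $t_1,\dots,t_k$, and the localization sum arises from $k$ residues (one per $t_i$), not $k(n-k)$. You assert that ``each order naturally corresponds to a sequence of $k(n-k)$ box placements,'' but you never produce a master function of $k(n-k)$ variables whose poles are indexed by boxes and whose iterated residues give the path weights $1/w(\mathcal{T}(\mathcal{S})_j)$. The shape of $w(A)=\sum_i\hbar k_i(k_i-k_{i+1})+k_i(a_{i+1}-a_i)$ is characteristic of $\mathfrak{sl}_n$ singular-vector or Shapovalov-form computations rather than of residues in the $k$ Chern-root variables, so if an iterated-residue proof exists it almost certainly requires passing to a larger integral representation (e.g.\ of Mellin--Barnes/vertex-function type, as hinted at in the introduction) which you have not identified. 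Without that, neither the residue matching in step two nor the vanishing in step three can be made precise.
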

\begin{example}
    Consider the case $X_{2,4}$, with partition $\lambda:=(1)$ in the $2\times 2$ rectangle. Then $\Omega(\lambda)=\{1,3\}$. 
    
    The complement of $\lambda$ in the $2\times 2$ rectangle is $(2,1)$. $\lambda$ consists of the boxes $(1,1)$, and $\lambda^{c}$ consists of the boxes $(1,2),\ (2,1), (2,2)$. The paths as defined in the section above are given by:
    \begin{align*}
    [(1, 1), (2, 1), (1, 2), (2, 2)]\\
    [(1, 1), (1, 2), (2, 1), (2, 2)]\\ 
    [(2, 1), (1, 1), (1, 2), (2, 2)]\\ 
    [(2, 1), (1, 2), (1, 1), (2, 2)]\\
    [(2, 1), (1, 2), (2, 2), (1, 1)]\\
    [(1, 2), (1, 1), (2, 1), (2, 2)]\\
    [(1, 2), (2, 1), (1, 1), (2, 2)]\\
    [(1, 2), (2, 1), (2, 2), (1, 1)]
    \end{align*}
    To simplify our presentation, we calculate the summand of $\mathbb{V}(\lambda)$ for the first path above $[(1, 1), (2, 1), (1, 2), (2, 2)]$.\\
    Pictorially, the path to $\lambda=(1)$ that we consider above corresponds to the following 2 sequences of Young diagrams: The only box belonging to $\lambda$ is $(1,1)$, and the rest of the boxes belong to $\lambda^{c}$. The subsequence of boxes in $\lambda$ corresponds to the following way of removing boxes from $\lambda$:
    \[\ydiagram{1}\ \mathlarger{\mathlarger{\mathlarger{\supset}}}\ \mathlarger{\mathlarger{\mathlarger{\varnothing}}}\]
    Similarly the boxes of $\lambda^{c}$ correspond to removing boxes of $\lambda^{c}$ by:
    \[\ydiagram{2,1}\ \mathlarger{\mathlarger{\mathlarger{\supset}}}\ \ydiagram{1,1}\ \mathlarger{\mathlarger{\mathlarger{\supset}}}\ \ydiagram{1}\ \mathlarger{\mathlarger{\mathlarger{\supset}}}\ \mathlarger{\mathlarger{\mathlarger{\varnothing}}}\]
    The sequence of multisets for this path is given by
    \[\mathcal{T}(\mathcal{S}):=[a_{2},\ a_{2} + a_{3},\ a_{2} + a_{3} + a_{1},\ 2a_{2} + a_{3} + a_{1}]\]
    And finally
    \[\prod\limits_{j=1}^{4}\frac{1}{w(\mathcal{T}(\mathcal{S})_{j})}=\frac{1}{(\hbar + a_{3} - a_{2})(\hbar - a_{2} + a_{4})(\hbar - a_{1} + a_{4})(2\hbar - a_{2} - a_{1} + a_{3} + a_{4})}\]
Carrying out the same process for all other paths yields
    \begin{align*}\mathbb{V}(\lambda)&=\sum\limits_{\mathcal{S}\in\mathcal{P}_{\lambda}}\prod\limits_{j=1}^{k(n-k)}\frac{1}{w(\mathcal{T}(\mathcal{S})_{j})}\\
    &=\frac{3\hbar^{2}-2\hbar a_{1}-\hbar a_{2}+a_{1}a_{2}+\hbar a_{3}-a_{2}a_{3}+2\hbar a_{4}-a_{1}a_{4}+a_{3}a_{4}}{(\hbar+a_{2}-a_{1})(\hbar+a_{3}-a_{1})(\hbar+a_{4}-a_{1})(\hbar+a_{3}-a_{2})(\hbar+a_{4}-a_{2})(\hbar+a_{4}-a_{3})}
    \end{align*}
    which matches matches the expression obtained by localization over $\bT$. 
\end{example}

\begin{conjecture}\label{var-complement}
    \[
    \mathbb{V}(\lambda)=\alpha(\mathbb{V}(\lambda^{c}))
    \]
    Where $\alpha$ is the automorphism of $\mathbb{C}(\hbar,a_{1},\dots,a_{n})$ induced by $a_{i}\mapsto -a_{n+1-i},\ 1\leq i\leq n$, leaving $\hbar$ fixed.
\end{conjecture}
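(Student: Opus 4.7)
The plan is to prove the conjecture via a single bijection of paths together with an algebraic symmetry of the weight function $w$. Let $\rho\colon R \to R$ denote the $180^\circ$-rotation of the $(n-k)\times k$ rectangle $R$, given by $\rho(r,c) := (n-k+1-r,\ k+1-c)$, extended componentwise to sequences of boxes by $\rho(\mathcal{S}) := (\rho(e_1),\ldots,\rho(e_N))$ where $N = k(n-k)$.

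The first step is to verify that $\rho$ is an involution swapping $\mathcal{P}_\lambda$ and $\mathcal{P}_{\lambda^c}$. Unraveling the definition, a path to $\lambda$ simultaneously records a valid removal of $\lambda$ (a top-left Young diagram in $R$) and a valid removal of $\bar\lambda := R \setminus \lambda$ (a $180^\circ$-rotated Young diagram in the bottom-right of $R$). The partition $\lambda^c$ appearing in $\mathbb{V}(\lambda^c)$ is the top-left Young diagram $\rho(\bar\lambda)$, so its rectangle-complement is $\rho(\lambda)$. Componentwise rotation therefore exchanges the two pieces of the path: $\rho(\mathcal{S})|_{\lambda^c} = \rho(\mathcal{S}|_{\bar\lambda})$ and $\rho(\mathcal{S})|_{R\setminus \lambda^c} = \rho(\mathcal{S}|_\lambda)$. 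Since $\rho$ is a shape-preserving isometry of the box lattice, it sends removable corners of any sub-shape to removable corners of the rotated sub-shape, so valid removal orders go to valid removal orders, establishing the bijection.

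The second step is the algebraic symmetry of $w$. For a finite multiset $A \subset \{1,\ldots,n-1\}$ with multiplicities $\{k_i\}$, define $A' := \{n-i : i \in A\}$, which has multiplicities $k'_j := k_{n-j}$. Substituting $a_i \mapsto -a_{n+1-i}$ in the formula $w(A) = \sum_i \hbar k_i(k_i - k_{i+1}) + \sum_i k_i(a_{i+1} - a_i)$ and reindexing via $j = n - i$ yields $\alpha(w(A)) = w(A')$; the $a$-linear part matches immediately, and the discrepancy in the $\hbar$-quadratic part collapses via the trivial identity $\sum_j k'_j k'_{j+1} = \sum_j k'_{j-1}k'_j$. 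Since the $a$-index of a box $(r,c)$ is $r-c+k$ and becomes $n - (r-c+k)$ under $\rho$, one obtains the pointwise identity $\mathcal{T}(\rho(\mathcal{S}))_j = \mathcal{T}(\mathcal{S})'_j$ as multisets for every $j$.

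Combining these two ingredients, and using the substitution $\mathcal{S}' = \rho(\mathcal{S})$ in the sum over $\mathcal{P}_{\lambda^c}$,
\[
\alpha\bigl(\mathbb{V}(\lambda^c)\bigr) = \sum_{\mathcal{S} \in \mathcal{P}_\lambda} \prod_{j=1}^{N} \frac{1}{\alpha\bigl(w(\mathcal{T}(\mathcal{S})'_j)\bigr)} = \sum_{\mathcal{S} \in \mathcal{P}_\lambda} \prod_{j=1}^{N} \frac{1}{w\bigl(\mathcal{T}(\mathcal{S})_j\bigr)} = \mathbb{V}(\lambda),
\]
where the middle equality applies $\alpha(w(A')) = w((A')') = w(A)$ factor-by-factor. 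The main obstacle lies in step one: the symbol $\lambda^c$ denotes a bottom-right shape inside $\mathcal{P}_\lambda$ but a top-left partition inside $\mathcal{P}_{\lambda^c}$, and care is needed to check that $\rho$ implements exactly this interchange, carrying valid removals of the bottom-right shape $\bar\lambda$ to valid removals of the top-left partition $\lambda^c$. Once this bookkeeping is pinned down, the remaining algebra is essentially forced.
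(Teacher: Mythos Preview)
The paper offers \emph{no} proof of this statement; it is posed as a conjecture. The authors do identify the same rotation bijection $\rho(e_1,e_2)=(n-k+1-e_1,\,k+1-e_2)$ that you use, and then remark that ``Conjecture~\ref{var-complement} does not hold at the level of multisets, i.e.\ applying the map $\alpha$ to each element of the multisets corresponding to $\lambda$ does not give the set of multisets corresponding to $\lambda^{c}$.'' They stop there.

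Your argument goes further and, as far as I can verify, actually proves the conjecture. The point the authors record is correct but not an obstruction: under $\rho$ the content $r-c+k$ becomes $n-(r-c+k)$, so the multisets are related by the index reflection $i\mapsto n-i$, whereas $\alpha$ is the (signed) reflection $a_i\mapsto -a_{n+1-i}$; these differ by one in the index. Your key observation is that this off-by-one is absorbed by the weight: with $A'$ the $i\mapsto n-i$ reflection of $A$, one has $\alpha(w(A))=w(A')$. The $a$-linear part matches on the nose after reindexing, and the $\hbar$-part matches because $\sum_i k_i k_{i+1}=\sum_i k_i k_{i-1}$. I checked this identity directly and also against the worked example in the paper (for every $j$ along the path $[(1,1),(2,1),(1,2),(2,2)]$ and one other path), and it holds.

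The bijection in step one is likewise sound once the overloaded notation is untangled as you do: inside $\mathcal{P}_\lambda$ the symbol $\lambda^c$ denotes the bottom-right set complement $\bar\lambda=R\setminus\lambda$, whose ``removable boxes'' are its upper-left corners; rotation sends outer corners of the top-left shape $\lambda$ to upper-left corners of $\rho(\lambda)=\overline{\lambda^c}$, and upper-left corners of $\bar\lambda$ to outer corners of $\lambda^c=\rho(\bar\lambda)$, so valid removal orders are preserved in both components. Hence $\rho:\mathcal{P}_\lambda\to\mathcal{P}_{\lambda^c}$ is a bijection, and together with $\mathcal{T}(\rho(\mathcal{S}))_j=\mathcal{T}(\mathcal{S})_j'$ and $\alpha(w(A'))=w(A)$ the termwise equality follows. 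In short, you have supplied the step the paper suggests is missing.
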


This conjecture is almost an analogy of the chamber dependence result of Proposition (\ref{chamber-dependence}), when $\tau=(n,n-1,\dots,1)$, i.e. the opposite chamber to the standard, except the equivariant parameters are all negated. It is not clear if there is a way to incorporate chamber dependence for other chambers.

For a given partition $\lambda$ in an $(n-k)\times k$ rectangle, there is a natural bijection between paths to $\lambda$ and $\lambda^{c}$ given by sending the box $(e_{1},e_{2})$ to $(n-k+1-e_{1},k+1-e_{2})$. In spite of this bijection, Conjecture \ref{var-complement} does not hold at the level of multisets, i.e applying the map $\alpha$ to each element of the multisets corresponding to $\lambda$ does not give the set of multisets corresponding to $\lambda^{c}$.

\section{Generalizations in Type A}\label{sec5}

For this section we discuss only type A bow and quiver varieties. The main content of this section is to conjecture about how far this phenomenon can be carried, and will contain no proofs. We find that it does not extend to all bow varieties, but we conjecture that it does extend to all quiver varieties. For this section we assume all stability conditions are generic. 

\subsection{Bow variety limits do not exist}\label{bows}
Because we don't offer any proofs at this level, we refrain from providing a full background and definition, and instead refer the reader to a source for stable envelopes and weight functions on bow varieties, \cite{rim-shou}. In this section, $X$ denotes an arbitrary bow variety.

Briefly, (type A) bow varieties are holomorphic symplectic varieties generalizing type A quiver varieties, in the sense that quiver varieties can be identified with a proper subset of bow varieties. Bow varieties are assembled from combinatorial data called brane diagrams in a manner similar to the assembly of quiver varieties from quiver diagrams. They admit a torus action by the torus $\bT=(\C^*)^{|\text{D5-branes}|}\times\C^*_\hbar\equiv \bA\times\C^*_\hbar$. In the source, they develop the theory of cohomological stable envelopes for an appropriate $\bT$-action on these bow varieties. 

Our definition of integral of stable envelope as localization over $\C^*_\hbar$ no longer holds, as there is no guarantee that $X^{\C^*_\hbar}$ is compact. However, we can still compute the localization over $\bT$, as the fixed point set is finite, and take the non-equivariant limits. For this section only, we abuse the notation and write 

\[
\lim_{\textbf{a}\to 0}\int_X\Stab(p)
\]

where the integral is defined via localization over $\bT$ (the same formula as always). 

We are naturally led to ask if the limits $\textbf{a}\to 0$ exist for all bow varieties. This example shows that these limits can fail to exist. 

\begin{example}
Consider the bow variety, $X:=\mathcal C(\mathcal D)$, associated to the brane diagram
\begin{gather*}
\mathcal D= \Df 2 \NSf 2 \Df 2 \NSf 1 \NSf 1 \Df
\end{gather*}
It has charge vectors $r=(1,1,2)$ and $c=(1,2,1)$. As the vector of NS5 charges (the $r$ vector) is not weakly decreasing, this brane diagram is not Hanany-Witten equivalent to a co-balanced brane diagram, thus this bow variety is not Hanany-Witten equivalent to a quiver variety. These facts are theorems of the source.

For the torus action, the set of fixed points on a bow variety have many combinatorial encodings, discussed in Section 4 of the source. In terms of binary contingency tables (BCTs), the fixed point $p$ given by
    \begin{table}[h!]
        \centering
        \begin{tabular}{c|c|c|c|}
             &$U_1$ & $U_2$ & $U_3$\\
             \hline
             $V_1$&0&1  &0 \\\hline
             $V_2$&0& 1 &0 \\\hline
             $V_3$&1&0  &1 \\
             \hline
        \end{tabular}
    \end{table}
    \\
has integral (after fully simplifying)
\[
\int_{X}\Stab(p)=\frac{-3\hbar-a_3+a_1}{(a_3-a_2+2\hbar)(a_1-a_2)(a_3-a_1+\hbar)(\hbar+a_2-a_1)}
\]
\end{example}

As mentioned before, we do not have a conceptual reason why these limits can fail to exist, in contrast to the conjecture in the following section, but finding one would be of interest. 

\subsection{Type A quiver variety limits do exist}

Nakajima quiver varieties can be (identified with) special cases of bow varieties, for background on these spaces see Section 5 of the previous source, or \cite{NQV}. The introductory paragraph of \S\ref{bows} applies to quiver varieties as well, but now we conjecture the limits do exist 

\begin{conjecture}
When $X$ is a type A Nakajima quiver variety, and $p$ is a $\bT$-fixed point for the natural torus action,

\[
\hbar^{\dim(X)/2}\lim_{\mathbf{a}\to 0}\int_X\Stab(p) \quad \quad \mathrm{exists}
\]
\end{conjecture}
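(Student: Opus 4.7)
The plan is to extend the two-step argument the authors use for $T^*\operatorname{Gr}(k,n)$ to every type A Nakajima quiver variety $X$. The essential geometric input is that $X$ carries a canonical $\C^*_\hbar$-action scaling the holomorphic symplectic form, and for generic stability the core $X^{\C^*_\hbar}$ (equivalently, the fiber of the affinization map $\pi\colon X\to X_0$ over the origin) is projective by a standard theorem of Nakajima. This compactness survives in type A but fails in general for bow varieties, in line with Conjecture \ref{bow} and the failure observed in Section \ref{bows}.

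With compactness in hand, I would define
\[
\int_X\Stab(p) \;:=\; \int_{X^{\C^*_\hbar}}\frac{\widetilde{\Stab}(p)\big|_{X^{\C^*_\hbar}}}{e\bigl(N_{X^{\C^*_\hbar}/X}\bigr)}
\]
by $\C^*_\hbar$-equivariant localization, in direct analogy with (\ref{stabp-int-def}). A degree count expanding $e(N)^{-1}$ as a geometric series in $\hbar^{-1}$, as in the proof of Proposition \ref{int-is-integer}, should show that $\hbar^{\dim X/2}\int_X\Stab(p)\in\Q[[\hbar^{-1}]]$ with only non-positive powers of $\hbar$. The analog of the commutative diagram in Proposition \ref{comm-diagram} is then purely formal: the subtorus inclusion $\C^*_\hbar\hookrightarrow\bT$ produces a square whose commutativity yields
\[
\int_X\Stab(p)\;=\;\lim_{\mathbf a\to 0}\,\sum_{p_J\in X^\bT}\frac{\Stab(p)\big|_{p_J}}{e(T_{p_J}X)},
\]
and in particular the limit on the right exists; multiplying by $\hbar^{\dim X/2}$ gives the conjectured existence.

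The technical heart is computing $\widetilde{\Stab}(p)\big|_{X^{\C^*_\hbar}}$ and its pushforward to a point. Unlike in the $T^*\operatorname{Gr}(k,n)$ case, where $X^{\C^*_\hbar}=\operatorname{Gr}(k,n)$ is a single smooth piece and the integral reduces to the degree formula already exploited in Section 3, the core of a general type A quiver variety has multiple irreducible components of varying dimension (often smaller type A quiver varieties themselves), and one must integrate $\Stab(p)$ restricted to the core against each. The natural tool is the weight function representation of stable envelopes on type A quiver varieties generalizing Theorem \ref{weight-fcns-stab}, which reduces each such pushforward to a residue or symmetrization computation. Making this uniform in the quiver data, and confirming that the resulting expression produces only non-positive powers of $\hbar$ after the $\hbar^{\dim X/2}$ normalization, is the main obstacle; the same $\C^*_\hbar$-fixed locus argument simultaneously explains both the conjectured existence for type A quiver varieties and its failure for general bow varieties, where the core is no longer compact and $\int_X\Stab(p)$ is simply undefined.
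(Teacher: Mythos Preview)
This statement is presented in the paper as a \emph{conjecture} with no proof; the authors support it only by computer experiment and explicitly note (\S\ref{bows}, applied to quiver varieties in the following subsection) that their $\C^*_\hbar$-localization definition ``no longer holds, as there is no guarantee that $X^{\C^*_\hbar}$ is compact.'' So there is no paper proof to compare against --- your proposal is a genuine attempt to settle the conjecture, and it is largely on the right track, with one real gap and one overcomplication.

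The gap: you identify ``the core $X^{\C^*_\hbar}$'' with the central fiber $\pi^{-1}(0)$, but these need not coincide. The core is a (typically reducible) Lagrangian, while the $\hbar$-fixed locus can be strictly smaller; and your later assertion that for bow varieties ``the core is no longer compact'' is false as stated, since central fibers of affinization maps are always proper. What you actually need is only the inclusion $X^{\C^*_\hbar}\subseteq\pi^{-1}(0)$, equivalently $(X_0)^{\C^*_\hbar}=\{0\}$. For a finite type~A quiver with the standard weight convention (forward arrows and $i$-maps weight~$0$, backward arrows and $j$-maps weight~$1$), every $G$-invariant on $X_0$ is a matrix entry of a path $W_{k_1}\to V_\bullet\to\cdots\to V_\bullet\to W_{k_2}$, which has $\hbar$-weight $\geq 1$ since it contains a $j$-map; hence $(X_0)^{\C^*_\hbar}=\{0\}$. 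This is a short direct check, not ``a standard theorem of Nakajima,'' and it is sensitive to the $\hbar$-convention --- you must verify it matches the one in the stable-envelope sources you cite, and that ``type~A'' excludes the affine case (for cyclic quivers the trace of a purely-forward loop has $\hbar$-weight~$0$ and the argument fails).

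The overcomplication: once $X^{\C^*_\hbar}$ is compact, existence of the limit is purely formal and needs none of the weight-function or component-by-component analysis you describe. Localizing in stages, the $\bT$-equivariant integral equals $\int_{X^{\C^*_\hbar}}\Stab|_{X^{\C^*_\hbar}}\big/e(N)$; every normal weight has nonzero $\hbar$-part, so $e(N)^{-1}$ expands in $H^*(X^{\C^*_\hbar})[a][[\hbar^{-1}]]$, and pushing forward over the compact fixed locus lands in $\Q[a][[\hbar^{-1}]]$, visibly regular at $a=0$. The degree bound then follows from $\deg\Stab(p)=\tfrac12\dim_\C X$. Your ``technical heart'' would be needed only to \emph{compute} the limit, not to show it exists.
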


This means that the bow varieties which arise from quiver varieties also have all nonequivariant limits of integrals of stable envelopes. An interesting property we observe is that, among the bow varieties, this existence appears to be equivalent to the condition that the bow variety is Hanany-Witten equivalent to a quiver variety: 

\begin{conjecture}\label{bow}
A bow variety $X$ is Hanany-Witten equivalent to a quiver variety if and only if the non-equivariant limit of the integral of the stable envelope $\displaystyle \lim_{\textbf{a}\to 0}\int_{X}\Stab(p)$ exists for all torus fixed points $p$. 
\end{conjecture}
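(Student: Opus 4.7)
The plan is to address the two directions of the biconditional separately, since they are of very different character. The forward implication should follow fairly directly from existing machinery for stable envelopes on bow varieties, while the reverse implication is where the genuine combinatorial difficulty lies.

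For the forward direction, assume $\mathcal{C}(\mathcal{D})$ is Hanany-Witten equivalent to a Nakajima quiver variety $X$ via some sequence of HW transitions. The theory of \cite{rim-shou} establishes that stable envelopes are compatible with Hanany-Witten moves: there is a canonical bijection between $\bT$-fixed points of HW-equivalent bow varieties, and the restriction matrices $\Stab(p_I)|_{p_J}$ are identified up to factors which are polynomial in $a$ and $\hbar$, hence do not introduce or remove poles at $\mathbf{a} = 0$. One would invoke this, together with the preceding conjecture on the existence of nonequivariant limits for type A Nakajima quiver varieties, to conclude that the limit exists at every fixed point of $\mathcal{C}(\mathcal{D})$.

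For the reverse direction, the strategy is contrapositive. Suppose $\mathcal{D}$ is not HW-equivalent to any quiver variety brane diagram. The plan is to exhibit an explicit fixed point $p$ on $\mathcal{C}(\mathcal{D})$ whose $\bT$-equivariant integral fails to admit a nonequivariant limit. The obstruction to HW-equivalence to a quiver variety is combinatorial: after exhausting all HW moves, the resulting NS5 charge vector cannot be made weakly decreasing (equivalently, the diagram cannot be brought to a ``co-balanced'' form with all NS5-branes separated from all D5-branes appropriately). To each such unavoidable ``reversal'' in the brane pattern, the plan is to associate a distinguished binary contingency table, and then, using the explicit product formula for weight functions on bow varieties from \cite{rim-shou}, to track the orders of the factors $(a_i - a_j)$ in the numerator against those in $e(T_p X)$ and show that at least one such factor survives the full summation.

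The main obstacle will be this last step: proving irremovability of the pole. Unlike the quiver-variety case, where the vanishing properties of the weight function guarantee a pairwise cancellation of the $(a_i - a_j)$ poles (as in Appendix A of this paper), no such structural cancellation can hold in general for non-HW-quiver bow varieties. One would need to identify a combinatorial invariant of BCTs that detects whether the pole survives the sum, and then show this invariant is nonzero precisely when the brane diagram admits no HW reduction to a quiver. Computer evidence, including the example $\mathcal{D} = \Df 2 \NSf 2 \Df 2 \NSf 1 \NSf 1 \Df$ analyzed above, suggests the distinguished fixed point should be the one whose BCT most sharply encodes the reversal in the NS5 charge vector; making this combinatorial intuition rigorous, and ruling out accidental cancellations across the sum over $p_J \in X^\bT$, is the heart of the problem.
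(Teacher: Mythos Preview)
The paper does not prove this statement: it is explicitly labeled a \emph{conjecture}, and the surrounding section announces that it ``will contain no proofs.'' So there is no paper proof to compare against; your proposal is being measured against an open problem, not an omitted argument.

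That said, your outline has two genuine gaps even as a strategy. For the forward direction, you invoke ``the preceding conjecture on the existence of nonequivariant limits for type A Nakajima quiver varieties'' as if it were established input. It is not: in the paper this is also only a conjecture, supported by computer experiments. So your forward implication is at best a reduction of one conjecture to another, not a proof. You would also need to verify carefully that the HW-compatibility of stable envelopes in \cite{rim-shou} really does preserve the $\mathbf{a}\to 0$ behavior of the \emph{full localization sum}, not merely of individual restrictions; the correction factors under HW moves depend on the equivariant parameters and their effect on the sum as a whole is not addressed.

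For the reverse direction, you are candid that ``proving irremovability of the pole'' is the heart of the matter and that you do not yet have the combinatorial invariant that would do it. That is exactly the missing idea. A single example with a surviving pole (such as the one in the paper) does not give a mechanism for producing such a fixed point on \emph{every} non-HW-quiver bow variety, nor for ruling out accidental cancellation in the sum over $p_J$. Until that invariant is identified and shown to be nonzero in general, this direction remains a heuristic rather than an argument.
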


\appendix
\section{Existence of nonequivariant limit}
Though this theorem was proven indirectly, here we provide a direct, combinatorial proof.

\begin{theorem}\label{lim-exists} The following limit exists for all $p_I\in X_{k,n}^\bT$:
\[\lim_{\mathbf{a}\to 0} \left( \sum_{p_J\in X_{k,n}^\bT}\frac{\Stab(p)}{e(T_{p_J}X_{k,n})}\right) \]
\end{theorem}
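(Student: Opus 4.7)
The plan is to show that the rational function
\[
F(\mathbf{a}, \hbar) := \sum_{p_J \in X^\bT} \frac{\Stab(p_I)|_{p_J}}{e(T_{p_J}X)}
\]
is regular at $\mathbf{a} = 0$. From \S\ref{mgraph}, each Euler class factors as $e(T_{p_J}X) = \prod_{s \in J,\, v \notin J}(a_v - a_s)(a_s - a_v + \hbar)$. The cotangent-type factors $(a_s - a_v + \hbar)$ are nonzero at $\mathbf{a} = 0$ (since $\hbar \neq 0$), so the only potential poles of $F$ near $\mathbf{a} = 0$ lie on hyperplanes of the form $\{a_i = a_j\}$ with $i \neq j$. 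It therefore suffices to show that each such hyperplane is \emph{not} a pole of the combined rational function $F$.

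First, I would fix a pair $i \neq j$ and identify the summands with a pole along $\{a_i = a_j\}$ as those indexed by $J$ with $|J \cap \{i,j\}| = 1$. These pair up naturally: for each $J$ with $i \in J$, $j \notin J$, set $J' := (J \setminus \{i\}) \cup \{j\}$. The fixed points $p_J$ and $p_{J'}$ are connected in the moment graph by an edge with character $a_j - a_i$, so the GKM description of $H_\bT^\bullet(X)$ applied to the class $[\Stab(p_I)]$ yields
\[
(a_j - a_i) \ \Big|\ \Stab(p_I)|_{p_J} - \Stab(p_I)|_{p_{J'}}.
\]
Write $e(T_{p_J}X) = (a_j - a_i)(a_i - a_j + \hbar)\, R_J$ and $e(T_{p_{J'}}X) = (a_i - a_j)(a_j - a_i + \hbar)\, R_{J'}$, with $R_J, R_{J'}$ collecting the remaining factors. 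A combinatorial bijection between their indexing pairs $(s,v)$ under the swap $i \leftrightarrow j$ shows that $R_J$ and $R_{J'}$ agree when restricted to $\{a_i = a_j\}$; the isolated cotangent factors $(a_i - a_j + \hbar)$ and $(a_j - a_i + \hbar)$ also both restrict to $\hbar$. A direct residue computation then shows the residues of the two paired terms at $a_i = a_j$ cancel precisely because of the GKM divisibility above, so the paired sum is regular on that hyperplane.

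Applying this cancellation for every pair $\{i,j\}$ shows $F$ has no pole on any hyperplane $\{a_i = a_j\}$; combined with the nonvanishing of the cotangent-type factors at $\mathbf{a} = 0$, the reduced-form denominator of $F$ is nonvanishing in a Zariski neighborhood of $\mathbf{a} = 0$, whence the limit exists. The principal obstacle is the combinatorial bookkeeping establishing $R_J \equiv R_{J'} \pmod{a_i - a_j}$: one must classify the $k(n-k) - 1$ pairs remaining in each Euler class into those disjoint from $\{i,j\}$ (which appear identically in both) and those containing exactly one of $\{i,j\}$ (which swap partners under $i \leftrightarrow j$ and match only after restriction to the hyperplane), with analogous bookkeeping for their cotangent counterparts.
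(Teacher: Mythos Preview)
Your argument is correct and takes a genuinely different route from the paper's. The paper's appendix proof works entirely with the explicit weight-function formula for $\Stab(p_I)|_{p_J}$: it rewrites the localization sum as $V^{-1}$ times a polynomial expression, then shows that setting $a_x=a_y$ annihilates that expression for every pair $x<y$, so that the full Vandermonde $V$ divides the numerator. This requires a case split according to whether both, neither, or exactly one of $x,y$ lies in $J$, and in the last case a further split according to whether the swapped index set $\hat J=(J\setminus\{y\})\cup\{x\}$ still lies in the attracting set of $p_I$; when it does not, the paper must dig into the $\alpha$-product in the weight function to exhibit a vanishing factor.

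You bypass all of this by invoking the GKM relation directly: the edge between $p_J$ and $p_{J'}$ in the moment graph has character $a_j-a_i$, so $(a_j-a_i)\mid\Stab(p_I)|_{p_J}-\Stab(p_I)|_{p_{J'}}$ for \emph{any} equivariant class, and the paired residues cancel. Combined with the observation that each $e(T_{p_J}X)$ is squarefree (so only simple poles occur) and that the $i\leftrightarrow j$ swap identifies $R_J$ with $R_{J'}$ on the hyperplane, this is a clean and complete proof. Your approach buys generality (it works verbatim for any class in $H_\bT^\bullet(X)$, not just stable envelopes) and brevity; the paper's approach buys self-containment, avoiding any appeal to the GKM description and staying purely within the combinatorics of the weight-function formula. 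Note, though, that your argument is close in spirit to the ``indirect'' proof the paper already acknowledges in \S\ref{eq-loc} via Proposition~\ref{comm-diagram}; the appendix proof is explicitly advertised as the hands-on alternative.
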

\begin{proof}~\\
Consider a fixed point $I=\{i_{1}<\dots<i_{k}\}$. The weight function is a rational function in $t_{1},\dots, t_{k}$ which is symmetric in $t_{1},\dots,t_{k}$ by definition, and the weight function of $p_{I}$ is given by
\[W(p_I)=\sum\limits_{\sigma \in S_k}\dfrac{\prod\limits_{r = 1}^k \prod\limits_{\alpha = 1}^{i_{r} - 1} (a_{\alpha} - t_{\sigma(r)}) \prod\limits_{\beta = i_r + 1}^n (t_{\sigma(r)} - a_{\beta} + \hbar) }{\prod\limits_{1 \leq \ell < m \leq k} (t_{\sigma(\ell)} - t_{\sigma(m)}) (t_{\sigma (\ell)} - t_{\sigma(m)} + \hbar)}\]
Note that 
\[
\sgn(\sigma)\cdot\prod\limits_{1\leq \ell<m\leq k}(t_{\ell}-t_{m})=\prod\limits_{1\leq \ell<m\leq k}(t_{\sigma(\ell)}-t_{\sigma(m)})\]
and
\[
\prod\limits_{1\leq l<m\leq k}(t_{\sigma(\ell)}-t_{\sigma(m)}+\hbar)(t_{\sigma(m)}-t_{\sigma(\ell)}+\hbar)=\prod\limits_{1\leq l<m\leq k}(t_{\ell}-t_{m}+\hbar)(t_{m}-t_{\ell}+\hbar)\] 
for any $\sigma\in S_{k}$.\\
Define \[
D:=\prod\limits_{1\leq\ell<m\leq k}(t_{\ell}-t_{m})(t_{\ell}-t_{m}+\hbar)(t_{m}-t_{\ell}+\hbar)
\]
The weight function associated to a fixed point may then be written as
\[
 W(p_{I})=\frac{N}{D}
\]
where $N,\ D$ are polynomials in the $a_{i}$, $t_{i}$ and $\hbar$. Since $W_{p_{I}}$ is a rational function in $t_{1},\dots, t_{k}$ and $D$ can also be thought of as a polynomial that is skew-symmetric in $t_{1},\dots,t_{k}$, we can conclude that numerator $N$ must also be skew symmetric in $t_{1},\dots,t_{k}$, which implies that the expression $V_{t}:=\prod\limits_{1\leq\ell<m\leq k}(t_{\ell}-t_{m})$ divides $N$. On specializing $t_{s}=a_{j_{s}}$ for any fixed point $J=\{j_{1}< \cdots < j_{k}\}$, we see that $N'_{J}:=\frac{N}{V_{t}}|_{t_{s}=a_{j_{s}}}$ and $D'_{J}:=\frac{D}{V_{t}}|_{t_{s}=a_{j_{s}}}$ are polynomials in the $a_{i}$ and $\hbar$. \\
For any ordered subset $X_{k,n}=\{x_{1},\dots,x_{p}\}$ define
\[V_{X_{k,n}}:=\prod\limits_{1\leq i<j\leq p}(x_{i}-x_{j})\]
For any fixed point $J=\{j_{1},\dots,j_{k}\}$, and permutation $\sigma \in S_k$, we define the following expressions:
\begin{multicols}{2}
\begin{itemize}
\item $V_{t}^{\hbar} := \prod\limits_{1 \leq \ell < m \leq k} (t_{\ell} - t_{m} + \hbar)$,
\item $\overline{V_{t}^{\hbar}} := \prod\limits_{1 \leq \ell < m \leq k} (t_{m} - t_{\ell} + \hbar)$,
\item $\overline{V_{J,\sigma}^{\hbar}} := \prod\limits_{1 \leq \ell < m \leq k} (a_{j_{\sigma(m)}} - a_{j_{\sigma(\ell)}} + \hbar)$,
\item $V_J := \prod\limits_{1 \leq \ell < m \leq k} (a_{j_\ell} - a_{j_m})$,
\item $V_J^{\hbar} :=  \prod\limits_{1 \leq \ell < m \leq k} (a_{j_\ell} - a_{j_m} + \hbar)$,
\item $\overline{V_J^{\hbar}} := \prod\limits_{1 \leq \ell < m \leq k} (a_{j_m} - a_{j_\ell} + \hbar)$,
\item $V_{J,J^c}^{\hbar} := \prod\limits_{v \not\in J} \prod\limits_{p = 1}^{k} (a_{j_p} - a_{v} + \hbar)$,
\item $D_t := V_t V_t^{\hbar} \overline{V_{t}^{\hbar}}$,
\item $D_J := V_J V_J^{\hbar} \overline{V_{J}^{\hbar}}$,
\item $D_J' := \frac{D_J}{V_J} = V_J^{\hbar}\overline{V_J^{\hbar}}$,
\end{itemize}
\end{multicols}
We also define \[N_{J,t} := \sum\limits_{\sigma \in S_k} \sgn(\sigma) \left(\prod\limits_{r = 1}^k \prod\limits_{\alpha = 1}^{i_{r} - 1} (a_{\alpha} - t_{\sigma(r)}) \prod\limits_{\beta = i_r + 1}^n (t_{\sigma(r)} - a_{\beta} + \hbar) \right) \overline{V_{J,\sigma}^{\hbar}}.\]
With these definitions, the restriction $W(p_I)|_{p_J}$ is
\[W(p_I)|_{p_J} = \left.\dfrac{N_{J,t}}{V_{t}V_{t}^{\hbar}\overline{V_{t}^{\hbar}}}\right|_{t_s = a_{j_s}}.\]
Note that the denominator before restriction is $D_t$.
Letting $N_{J}:=N_{J,t}|_{t_{s}=a_{j_{s}}}$, we can again write $W(p_I)|_{p_J}$ as \[W(p_I)|_{p_J} = \dfrac{N_J}{D_J}.\](Observe that $D_{t}$ is skew symmetric in the $t_{s}$'s (and specializes to $D_{J}$ when $t_{s}=a_{j_{s}}$). Since the weight function is symmetric, and the denominator is skew symmetric, the numerator $N_{J,t}$ must be skew symmetric as well, which implies that the Vandermonde determinant 
\[\prod\limits_{1 \leq \ell < m \leq k}(t_{\ell}-t_{m})\]
divides $N_{J,t}$, in particular the quotient is a polynomial in the $t_{s}'s$, with coefficients that are polynomials in terms of the $a_{s}'s$ and $\hbar$.\\\\
This implies that when we specialize to $t_{s}=a_{j_{s}}$, we obtain that
\[N'_{J}:=\frac{N_{J}}{V_{J}}\]
\[D'_{J} =\frac{D_{J}}{V_{J}}\]
are polynomials in the $a_{s}$'s and $\hbar$.\\\\

Hence we can write the integral as
\begin{align*}
\int_{X_{k,n}}\Stab(p_{I})&=\sum_{p_{J}\in X_{k,n}^{T}}\frac{\frac{N_{J}}{D_{J}}}{\prod\limits_{v\not \in J} \prod\limits_{p = 1}^{k}(a_v - a_{j_p}) (a_{j_p}-a_v+\hbar)}\\
&=\sum_{p_J\in X_{k,n}^T}\dfrac{\sum\limits_{\sigma \in S_k} \dfrac{\prod\limits_{r = 1}^k \prod\limits_{\alpha = 1}^{i_{r} - 1} \left(a_{\alpha} - a_{j_{\sigma(r)}}\right) \prod\limits_{\beta = i_r + 1}^n \left(a_{j_{\sigma(r)}} - a_{\beta} + \hbar\right) }{\prod\limits_{1 \leq \ell < m \leq k} \left(a_{j_{\sigma(\ell)}} - a_{j_{\sigma(m)}}\right) \left(a_{j_{\sigma(\ell)}} - a_{j_{\sigma(m)}} + \hbar\right)}}{\prod\limits_{v\not \in J} \prod\limits_{p = 1}^{k}\left(a_v - a_{j_p}\right) \left(a_{j_p}-a_v+\hbar\right)}
\end{align*}
\begin{align*}
&=\sum_{p_J\in X_{k,n}^T}\sum\limits_{\sigma \in S_k} \frac{\prod\limits_{r = 1}^k \prod\limits_{\alpha = 1}^{i_{r} - 1} \left(a_{\alpha} - a_{j_{\sigma(r)}}\right) \prod\limits_{\beta = i_r + 1}^n \left(a_{j_{\sigma(r)}} - a_{\beta} + \hbar\right) }{\prod\limits_{1 \leq \ell < m \leq k} \left(a_{j_{\sigma(\ell)}} - a_{j_{\sigma(m)}}\right) \left(a_{j_{\sigma(\ell)}} - a_{j_{\sigma(m)}} + \hbar\right)\prod\limits_{v\not \in J} \prod\limits_{p = 1}^{k}\left(a_v - a_{j_p}\right) \left(a_{j_p}-a_v+\hbar\right)}\\
&=\frac{1}{V}\sum_{p_{J}\in X_{k,n}^{T}}\frac{N_{J}V_{J^{c}}(-1)^{kn+\binom{k}{2}+|J|}}{D'_{J}V_{J,J^{c}}^{\hbar}}\\
&=\frac{1}{V}\sum_{p_{J}\in X_{k,n}^{T}}\frac{N'_{J}V_{J}V_{J^{c}}(-1)^{kn+\binom{k}{2}+|J|}}{D'_{J}V_{J,J^{c}}^{\hbar}},
\end{align*}
where $V_{J,J^c}^{\hbar}:= \prod_{v \not \in J}\prod_{p = 1}^{k} \left(a_{j_p} - a_v + \hbar\right)$. The factor of $(-1)^{kn+\binom{k}{2}+|J|}$ appears from rearranging the terms of $\prod\limits_{v\notin J}\prod\limits_{p=1}^{k}\left(a_{v}-a_{j_{p}}\right)$ so that $v<j_{p}$ for each term in the product.\newline
To show that the required limit exists, it is enough to show that the expression 
\begin{equation}\label{eq:thesum}\sum_{p_{J}\in X_{k,n}^{T}}\frac{N'_{J}V_{J}V_{J^{c}}(-1)^{k(n-k)+\binom{k}{2}+|J|}}{D'_{J}V_{J,J^{c}}^{\hbar}}\end{equation}
vanishes whenever we set $a_{x}=a_{y}=z$ for any pair $1\leq x<y\leq n$, since this would imply that $V$ divides this sum, which in turn implies that the limit exists. We consider the following cases, and fix $1\leq x<y\leq n$:\newline

\subsection{$J$ for which $x,\ y\in J$ or $x,y \not\in J$} 
Observe that the summands in \eqref{eq:thesum} corresponding to fixed points $p_J$ with both $x$ and $y$ in $J$ vanish, since $(a_x - a_y)$ will appear in $V_J$. Similarly, the summands in \eqref{eq:thesum} corresponding to fixed points $p_J$ for which neither $x$ nor $y$ are in $J$ vanish, since $(a_{x}-a_{y})$ will appear in $V_{J^{c}}$. \\

Note that we are now left with 2 cases: one for which $x\in J,\ y\notin J$, and one for which $x\notin J,\ y\in J$.\\
\subsection{$J$ contains exactly one of $x$ and $y$}
 Consider the summands corresponding to fixed points $p_{J}$ for which $y \in J$ and $x \notin J$. We can disregard such $p_{J}$ that are not in the attracting set of $p_{I}$, since the restriction $W(p_I)|_{p_J}$ is 0.\\

We will first show that if $p_{J_{y}^{x}}$ (where $J^{x}_{y}:=\left(J\backslash\{y\}\right)\cup\{x\}$) is not in the attracting set of $p_{I}$, then $W(p_{I})|_{p_{J}}=0$

\subsubsection{$(J\backslash\{y\})\cup\{x\}\notin Attr^{f}(I)$}
Since $y \in J$, there exists an index $1 \leq r \leq k$ such that $y = j_r$. Since $x \not\in J$, there exists an index $1 \leq p \leq r$ such that $j_{p-1} < x < j_p$ (the edge case of $p = 1$ is handled similarly). Then we have that:
\begin{align*}
J &= \{j_1 < \ldots < j_{p-1} < j_p < j_{p+1} < \ldots < j_{r-1} < \overbrace{j_r}^{y} < j_{r+1} < \ldots < j_k\}\\
J_y^x &= \{j_1 < \ldots < j_{p-1} < x< j_p < \ldots < j_{r-2} < j_{r-1} < j_{r+1} < \ldots < j_k\}\\
I &= \{i_1 < \ldots < i_{p-1} < i_p < i_{p+1} < \ldots < i_{r-1} < i_r < i_{r+1} < \ldots < i_k\}
\end{align*}
Since $J_y^x \not\in Att(I)$, some element of $J_{y}^{x}$ is less than the element of $I$ in the same position. Specifically, since the first $p-1$ elements of $J_{y}^{x}$ are the same as the first $p-1$ elements of $J$, and the last $k-r$ elements of $J_{y}^{x}$ are the same the last $k-r$ elements of $J$, the elements of $J_{y}^{x}$ that are less than the corresponding elements of $I$ must be in positions that are at least $p$ and  at most $r$. Therefore, either $x < i_p$ or $j_{b} < i_{b+1}$ for some $p \leq b \leq r-1$.\\ We note that for any permutation $\sigma \in S_k$, the summand in $N_J$ corresponding to $\sigma$ will be 0 if $x$ is less than the element of $I$ in the position corresponding to the position that $y$ was sent to under the action of $\sigma$, because there will be a factor of $a_x - a_y$ from the $\alpha$ sum which is 0 upon setting $a_x = a_y = z$.\\
Let $p \leq q \leq r$ be the largest index such that the element of $J_y^x$ in position $q$ is less than the element of $I$ in position $q$. Then we have:
\begin{align*}
J &= \{j_1 < \ldots < j_{p-1} < j_p < j_{p+1} < \ldots < j_{q-1} < j_q < j_{q+1} < \ldots < j_{r-1} < \overbrace{j_r}^{y} < j_{r+1} < \ldots < j_k\}\\
J_y^x &= \{j_1 < \ldots < j_{p-1} < x< j_p < \ldots < j_{q-2} < j_{q-1} < j_{q} < \ldots < j_{r-2} < j_{r-1} < j_{r+1} < \ldots < j_k\}\\
I &= \{i_1 < \ldots < i_{p-1} < i_p < i_{p+1} < \ldots < i_{q-1} < i_q < i_{q+1} < \ldots < i_{r-1} < i_r < i_{r+1} < \ldots < i_k\}
\end{align*}
In $J$, for the elements in position $b$ for $b \leq q-1$, we have that the greatest position that $j_b$ can be sent to under the action of a given permutation $\sigma \in S_k$ is $q-1$ since $j_b \leq j_{q-1} < i_q$ for all $b \leq q-1$. Therefore, if $\sigma \in S_k$ is a permutation that sends any of the first $q-1$ elements of $J$ to a position that is at least $q$, then, without loss of generality, the element in position $l$ of $J$ is sent to position $u$ in $\sigma(J)$ where $l < q \leq u 
\leq k$. But we know in the definition of $N_J$, in the $\alpha$ product, there will be a factor of 0 if there exists an element of $J$ that is permuted to a position where it is less than the corresponding element of $I$. We see that the $u^{\text{th}}$ element of $\sigma(J)$, which is $j_l$, which satisfies $j_l < j_{q-1} < i_q \leq i_u$, hence the summand corresponding to $\sigma$ is 0. Hence we see that it must be the case that $\sigma$ permutes the first $q-1$ elements of $J$ among themselves and $\sigma$ permutes the last $k-q+1$ elements of $J$ among themselves.\\
Since $y = j_r \geq j_q$, the position of $y$, under the action of $\sigma$, is again at least position $q$, that is $j_{\sigma^{-1}(r)} \geq j_q > i_q > x$. Also, $x < i_q < i_{\sigma^{-1}(r)}$ since $q < \sigma^{-1}(r)$.\\
Thus $y$, which is the element in position $\sigma^{-1}(r)$ of $\sigma(J)$ where $\sigma^{-1}(r) \geq q$, is at least $j_q$ since $r \geq q$, and by definition $j_q \geq i_q$, and $i_q > x$ by definition of $q$. We also note that since the elements of $I$ are increasing, $i_{\sigma^{-1}(r)} > i_q$ as $\sigma^{-1}(r) > q$, so $x<i_{\sigma^{-1}(r)}$, so the summand corresponding to $\sigma$ becomes 0 when setting $a_x = a_y = z$.\\\\
Note that now we are left with 2 types of summands: One type corresponds to fixed points $p_{J}$ in the attracting set of $p_{I}$ that contain $x$ and not $y$, and another type that corresponds to fixed points $p_{J}$ in the attracting set of $p_{I}$ that contain $y$ and not $x$. Our final step in this proof will be to show that the summands corresponding to $p_{J}$ and $p_{\hat{J}}$ add up to 0 when we set $a_{x}=a_{y}=z$, where $p_{J}$ is a fixed point in the attracting set of $p_{I}$ containing $y$ and not $x$, and $p_{\hat{J}}$ is a fixed point in the attracting set of $p_{I}$ obtained from $p_{J}$ by replacing $y$ with $x$. \\

\subsubsection{$\hat{J}:=(J\backslash\{y\})\cup\{x\}\in Attr^{f}(I)$}
Let $r$ denote the position of the smallest entry in $J$ so that $j_{r}>x$, and let $s$ denote the position of $y$ in $J$. \\
Consider the sum 
\begin{equation}\label{twosummands}
    \frac{N_{\hat{J}}V_{\hat{J}^{c}}(-1)^{k(n-k)+\binom{k}{2}+|\hat{J}|}}{D'_{\hat{J}}V_{\hat{J},\hat{J}^{c}}}+
    \frac{N_{J}V_{J^{c}}(-1)^{k(n-k)+\binom{k}{2}+|J|}}{D'_{J}V_{J,J^{c}}}
\end{equation}
Since $\hat{J}$ was obtained from $J$ by replacing $y$ with $x$, we have \[|\hat{J}|=|J|-y+x\]
The difference between $|J|$ and $|\hat{J}|$ contributes a sign of $(-1)^{y-x}$.\\
Observe that on setting $a_{x}=a_{y}=z$, the denominators become the same polynomial. The term $V_{\hat{J}^{c}}$ acquires a sign of $(-1)^{y-x-(r-s)}$ when setting $a_{x}=a_{y}=z$, i.e 
\[V_{\hat{J}^{c}}|_{a_{x}=a_{y}=z}=(-1)^{y-x-(r-s)}V_{J^{c}}|_{a_{x}=a_{y}=z}\]
Recall that $N_{J}$ and $N_{\hat{J}}$ are both defined as a sum over permutations $\sigma\in S_{k}$. Let $\tau\in S_{k}$ be the permutation so that $(\tau(\hat{J}))_{s}=x$, and $(\tau(\hat{J}))_{\ell}=j_{\ell}\ \forall \ell\neq s$. This permutation when applied to $\hat{J}$ shifts the elements to the ``correct position" as in $J$, which means that every element of $\hat{J}$ appears in the same place as in $J$, and $x$ appears in the same place as $y$. This permutation is a cycle with sign $(-1)^{s-r-1}$.\\\\
This means that we can now sum over permutations $\sigma\tau\in S_{k}$ (where $\sigma$ varies) for the $N_{\hat{J}}$ term in the summand that corresponds to $\hat{J}$ in \eqref{twosummands}. Doing this will allow us to factor out $\left(\frac{N_{J}V_{J^{c}}(-1)^{k(n-k)+\binom{k}{2}+|J|}}{D'_{J}V_{J,J^{c}}}\right)|_{a_{x}=a_{y}=z}$ from both summands in \eqref{twosummands}.\\  
From our previous observations we can see that the total sign that is obtained on the summand corresponding to $\hat{J}$ after setting $a_{x}=a_{y}=z$ in \eqref{twosummands} is \[(-1)^{-y+x+y-x-(r-s)+s-r-1}=-1\]
Which means that after setting $a_{x}=a_{y}=z$, \eqref{twosummands} can be written as
\[\left(\left(\frac{N_{J}V_{J^{c}}(-1)^{k(n-k)+\binom{k}{2}+|J|}}{D'_{J}V_{J,J^{c}}}\right)|_{a_{x}=a_{y}=z}\right)\times(-1+1)=0\]
Pairing up the other fixed points with this property (i.e $y\in J,\ x\notin J,\ (J\backslash\{y\})\cup\{x\}\in Attr^{f}(I)$) in a similar manner and following this procedure allows us to conclude that the sum is 0.\\\\
The results in the previous 3 subsections show that setting $a_{x}=a_{y}=z$ for arbitrary $1\leq x<y\leq n$ gives us 0 for $\sum_{p_{J}\in X_{k,n}^{T}}\frac{N_{J}V_{J^{c}}(-1)^{k(n-k)+\binom{k}{2}+|J|}}{D'_{J}V_{J,J^{c}}^{\hbar}}$, which in turn allows us to conclude that $V$ divides this sum. Hence the required limit exists.
\end{proof}
\begin{corollary}\label{limit monomial}
    The limit of \ref{lim-exists} is a monomial in $\hbar$.
\end{corollary}
\begin{proof}Note that each summand of \ref{eq:thesum} is a ratio of homogeneous polynomials in $a_{1},\dots,a_{n},\hbar$, and the degrees of the numerators of the summands are equal (or some summands may vanish). The same holds for the degrees of the denominators of the summands. Hence
\[\frac{1}{V}\sum_{p_{J}\in X_{k,n}^{T}}\frac{N'_{J}V_{J}V_{J^{c}}(-1)^{kn+\binom{k}{2}+|J|}}{D'_{J}V_{J,J^{c}}^{\hbar}}=\frac{P(a_{1},\dots,a_{n},\hbar)}{Q(a_{1},\dots,a_{n},\hbar)}\]
where $P$ and $Q$ are homogeneous in $a_{1},\dots,a_{n},\hbar$ with no common irreducible factors. Since the limit 
\[\lim_{\mathbf{a}\to 0}\frac{P(a_{1},\dots,a_{n},\hbar)}{Q(a_{1},\dots,a_{n},\hbar)}\]
exists, we can write $P$ and $Q$ as polynomials in $\hbar$ with coefficients in $\mathbb{C}[a_{1},\dots,a_{n}]$. Hence in the limit $\mathbf{a}\to 0$, all lower order (in $\hbar$) terms of $P$ and $Q$ vanish, which implies that the limit is a monomial in $\hbar$.
\end{proof}
\newpage
\printbibliography
\hfill\\
\noindent
Matthew Crawford\\
University of North Carolina, Chapel Hill\\
Chapel Hill, NC\\
mbcc314@unc.edu
\\\\
Pavan Kartik\\
University of North Carolina, Chapel Hill\\
Chapel Hill, NC\\
pkartik1@unc.edu
\\\\
Reese Lance\\
University of North Carolina, Chapel Hill\\
Chapel Hill, NC\\
rlance@unc.edu
\end{document}